\crefname{section}{Section}{Sections}
\crefname{figure}{Figure}{Figures}
\crefname{table}{Table}{Tables}
\crefname{equation}{}{}
\crefname{theorem}{Theorem}{Theorems}
\crefname{lemma}{Lemma}{Lemmas}
\crefname{remark}{Remark}{Remarks}
\crefname{problem}{Problem}{Problems}
\newtheorem{theorem}{Theorem}[section]
\newtheorem{example}{Example}[section]
\newtheorem{problem}{Problem}[section]
\newtheorem{remark}{Remark}[section]
\newtheorem{definition}{Definition}[section]
\begin{document}
	
	\title{Imaging an acoustic obstacle and its excitation sources from phaseless near-field data}
	
	\author{
		Deyue Zhang\thanks{School of Mathematics, Jilin University, Changchun, China, {\it dyzhang@jlu.edu.cn}},
		Yue Wu\thanks{School of Mathematics, Jilin University, Changchun, China, {\it wuy20@mails.jlu.edu.cn}}, 
		\ and 
		Yukun Guo\thanks{School of Mathematics, Harbin Institute of Technology, Harbin, China. {\it ykguo@hit.edu.cn} (Corresponding author)}
	}
	\date{}
	
\maketitle
	
\begin{abstract}
	This paper is concerned with reconstructing an acoustic obstacle and its excitation sources from the phaseless near-field measurements. By supplementing some artificial sources to the inverse scattering system, this co-inversion problem can be decoupled into two inverse problems: an inverse obstacle scattering problem and an inverse source problem, and the corresponding uniqueness can be established. This novel decoupling technique requires some extra data but brings in several salient benefits. First, our method is fast and easy to implement. Second, the boundary condition of the obstacle is not needed. Finally, this approximate decoupling method can be applied to other co-inversion problems, such as determining the medium and its excitation sources. Several numerical examples are presented to demonstrate the feasibility and effectiveness of the proposed method.
\end{abstract}

\noindent{\it Keywords}: inverse scattering, inverse source problem, phaseless, uniqueness, direct imaging


\section{Introduction}

Inverse problems of determining unknown excitations or scattering inclusions by acoustic waves appear in diverse areas of scientific and engineering importance, e.g.,  nondestructive evaluation, sonar detection, and ultrasonic tomography. In many areas of applied sciences, it is very difficult or extremely expensive to access the phased data of the complex-valued time-harmonic wave field, while the phaseless/modulus data is usually much cheaper to acquire \cite{KR17, MH17, Rom20}. Hence, reconstruction from phaseless data is both theoretically and practically important and has received considerable attention in the literature in recent years \cite{ACZ16, DLL20, LL15, LLW17, Nov16, XZZ20, ZGSL20, ZCLL17}. Recently, there is also a surge of interest in the co-inversion problem of simultaneously recovering the source term and the passive inhomogeneity from the scattering measurements, see, e.g. \cite{BLT21, CG22, HKZ20, LLM19, LLM21}. However, to our knowledge, the simultaneous reconstruction of source and scatterer from phaseless data has not yet been studied in the existing literature.

In this paper, we investigate a new model of simultaneously reconstructing the point sources and the impenetrable obstacle from phaseless scattering data. This phaseless co-inversion problem suffers from the drastic difficulties of nonlinearity and severe ill-posedness, in particular, the dual unknowns and the loss of phase information substantially obstruct the application of algorithms for phased inverse scattering problems. Meanwhile, it is well acknowledged that a lack of information cannot be remedied by merely the mathematical implementation itself and therefore the incorporation of additional information is indispensable for achieving effective reconstruction. To this end, motivated by the techniques of artificial reference sources/objects for phaseless inverse scattering problems \cite{DZG19, JLZ19a, JLZ19b, ZGLL18, SZG19, ZWGL20}, as well as the recent decoupling strategy for the interior co-inversion problem \cite{ZGWC22}, in the current work we propose a novel method for tackling the challenging phaseless co-inversion problem. The basic idea of our method is to decode the source and scatterer components from the scattering system with the help of the additional illuminations due to the so-called reference sources. These artificially appended sources play a significant role in decoupling the co-inversion problem into the usual inverse source subproblem and inverse obstacle scattering subproblems. Then the uniqueness issue on these subproblems could be separately analyzed, namely, the shape of the scatterer together with the boundary condition could be uniquely determined, meanwhile, the unknown source locations can be also uniquely identified. Moreover, the decoupled subproblems can be numerically treated easily by the direct sampling scheme and the reverse time migration method, respectively. The overall algorithm works with the data due to only a single frequency and does not rely on any solver of the forward problem. Since the computational demanding process is not involved, the proposed method can be implemented easily. In addition, neither the number of obstacles nor the boundary conditions are required to be priorly known. In our view, the aforementioned features of our study constitute the noteworthy novelty of this article.

The remaining part of this article is organized as follows. In the next section, we introduce the mathematical formulation of the phaseless co-inversion problem as well as the incorporation of reference sources. In \cref{sec: uniqueness}, the theoretical results are provided to justify the unique identifiability of underlying sources and obstacles from the modulus of near-field measurements. In \cref{sec:algorithms}, the phaseless co-inversion problem is decoupled into two subproblems of inverse obstacle scattering and inverse source problem. Then the subproblems are tackled respectively with direct imaging indicators. The indicating behavior for locating the sources is analyzed as well. Numerical experiments are presented in \cref{sec:examples} to confirm the theoretical analysis of our inversion process and illustrate the effectiveness and robustness of our newly proposed phaseless co-inversion algorithm.

\section{Problem setting}\label{sec:OP}

We begin this section by introducing the phaseless co-inversion problem of an obstacle and its excitation point sources under consideration. Let $D\subset\mathbb{R}^2$ be a simply connected bounded domain with $C^2$ boundary $\partial D$. For a generic point $z\in \mathbb{R}^2\backslash\overline{D}$, the incident field $u^i$ due to the point source located at $z$ is given by
\begin{equation}\label{pointsource}
	u^i (x; z)=\frac{\mathrm{i}}{4}H_0^{(1)}(k|x-z|), \quad x\in \mathbb{R}^2\backslash (D \cup\{z\}),
\end{equation}
where $H_0^{(1)}$ is the Hankel function of the first kind of order zero, and $k>0$ is the wavenumber. Then, the forward scattering problem can be stated as follows: given the source point $z$ and the obstacle $D$, find the scattered field $u^s(x; z)$ which satisfies the following boundary value problem (see \cite{CK19}):
\begin{align}
	\Delta u^s+ k^2 u^s & = 0\quad \mathrm{in}\ \mathbb{R}^2\backslash\overline{D},\label{eq:Helmholtz} \\
	\mathscr{B}u & = 0 \quad \mathrm{on}\ \partial D, \label{eq:boundary_condition} \\
	\lim\limits_{r:=|x|\to\infty} \sqrt{r}\bigg(&\frac{\partial u^s}{\partial r} - \mathrm{i} k u^s\bigg)=0, \label{eq:Sommerfeld}
\end{align}
where $u(x; z)=u^i(x; z)+u^s(x; z)$ denotes the total field and \cref{eq:Sommerfeld} is the Sommerfeld radiation condition. Here $\mathscr{B}$ in \cref{eq:boundary_condition} is the boundary operator defined by
\begin{equation}\label{BC}
	\mathscr{B}u=
	\begin{cases}
		u, & \text{for a sound-soft obstacle},  \\
		\dfrac{\partial u}{\partial \nu}+ \mathrm{i} k\lambda u, & \text{for an impedance obstacle}.
	\end{cases}
\end{equation}
where $\nu$ is the unit outward normal to $\partial D$ and $\lambda$ is a real parameter. This boundary condition \cref{BC} covers the Dirichlet/sound-soft boundary condition, the Neumann/sound-hard boundary condition ($\lambda=0$), and the impedance boundary condition ($\lambda\neq 0$). It is well known that the forward scattering problem \eqref{eq:Helmholtz}-\eqref{eq:Sommerfeld} admits a unique solution $u^s\in H_{\rm loc}^1(\mathbb{R}^2\backslash\overline{D})$ (see, e.g., \cite{CK19, McLean}),  and the scattered wave $u^s$ has the following asymptotic behavior
$$
	u^s(x; z)=\frac{\mathrm{e}^{\mathrm{i} k|x|}}{\sqrt{|x|}}\left\{ u^{\infty}(\hat{x}; z)+\mathcal{O}\left(\frac{1}{|x|}\right) \right\}, \quad |x|\to\infty
$$
uniformly in all observation directions $\hat{x}=x/|x|\in\mathbb{S}^1:=\{x\in\mathbb{R}^2: |x|=1\}$. The complex-valued analytic function $u^{\infty}(\hat{x}; z)$ defined on the unit circle $\mathbb{S}^1$ is called the far field pattern or scattering amplitude (see \cite{CK19}).

Let $z_j\in \mathbb{R}^2\backslash\overline{D}\, (j=1,\cdots, N)$ be mutually distinct source points, $P:=\cup_{j=1}^{N}\{z_j\}$, $B_\rho=\{x\in  \mathbb{R}^2:|x|<\rho\}$ and $\Gamma_\rho=\partial B_\rho$, such that $\left(\overline{D}\cup P\right)\subset B_\rho$.
Assume that $\Gamma_R=\partial  B_R$ is the measurement curve with $B_R=\{x\in  \mathbb{R}^2:|x|<R\}$ and $R>\rho$. Denote the superposition of wave fields by
$$
w(x; P)=\sum_{j=1}^N w(x; z_j),\quad z_j\in P,\ j=1, \cdots, N,
$$
with $w$ in place of $u^i, u^s, u$ or $u^\infty$, accordingly. Then, the phaseless co-inversion problem under consideration is to determine the obstacle-source pair $(\partial D, P)$ from the phaseless measurements $\{|u(x; P)|: x\in \Gamma_R\}$.

To our knowledge, this model inverse problem as well as its uniqueness issue were not studied in the literature. Motivated by the reference point techniques for tackling the phaseless inverse scattering problem, we will consider incorporating artificial point sources into the co-inversion system. Let $z_1, z_2\in \mathbb{R}^2\backslash(\overline{D}\cup P)$ be two additional source points and $t_1, t_2\in\mathbb{R}$. By the linearity of the direct scattering problem, the total field produced by the incident waves $u^i(x; P)+t_1 u^i(x; z_1)+t_2 u^i(x; z_2)$ is given by
$$
	u(x; P, t_1 z_1, t_2 z_2):= u(x; P)+t_1 u(x; z_1)+t_2 u(x; z_2), \ x\in \mathbb{R}^2\backslash(\overline{D}\cup P \cup\{z_1, z_2\}),
$$

To introduce the phaseless co-inversion problem with artificial sources, we also need the following definition of the admissible curve.

\begin{definition}[\cite{ZGSL20}] \label{def:admissible_curve}
	(Admissible curve) An open curve $\Gamma$ is called an admissible curve with respect to domain $\Omega$ if
\begin{itemize}
	\item[\rm (i)] $\Omega$ is simply-connected;
	\item [\rm (ii)] $\partial \Omega$ is analytic homeomorphic to $\mathbb{S}^1$;
	\item[\rm (iii)] $k^2$ is not a Dirichlet eigenvalue of $-\Delta$ in $\Omega$;
	\item[\rm (iv)] $\Gamma \subset \partial \Omega$ is a one-dimensional analytic manifold with nonvanishing measure.
\end{itemize}
\end{definition}


Now, the phaseless co-inversion problem with artificial sources under consideration can be stated as the following.

\begin{problem}[Phaseless co-inversion problem]\label{prob:co-inversion}
	Let $D$ be the impenetrable obstacle with boundary condition $\mathscr{B}$ and $P$ be a set of distinct source points such that $(D\cup P)\subset B_\rho\subset B_R$. Assume that $\Gamma$ is an admissible curve with respect to $G$ with $G\subset B_R\backslash \overline{B_\rho}$. Given the phaseless near-field data
	\begin{align*}
		& \{|u(x; P)|: x\in \Gamma_R\},  \\
		& \{|u(x; P, t_{\ell} z_0)|: x\in \Gamma_R,  \ell=1,2\}, \\
		& \{|u(x; P, t_{\ell} z)|: x\in \Gamma_R, z\in \Gamma_\rho\cup\Gamma, \ell=1,2\}, \\
		& \{|u(x; P, t_{\ell} z_0, t_{\ell} z)|: x\in \Gamma_R, z\in \Gamma_\rho\cup\Gamma, \ell=1,2\}
	\end{align*}
	for a fixed wavenumber $k$, a fixed $z_0\in \mathbb{R}^2\backslash \overline{B_\rho}$ and two positive constants $t_1, t_2$ with $t_1\neq t_2$, determine the obstacle-source pair $(\partial D, P)$.
\end{problem}

We refer to \cref{fig:setup} for an illustration of the geometry setting of \cref{prob:co-inversion}.

\begin{figure}
	\centering
	\begin{tikzpicture}[scale = .8, thick]
		\draw circle (5cm); 
		\draw node at (3.8, 3.8) {$\Gamma_R$};
		\draw node at (-2.6, 2.6) {$B_R$};
		\draw [blue] circle (2.65cm); 
		\draw node at (2.1, 2.1) {$\Gamma_\rho$};
		\draw node at (1.4, 1.2) {$B_\rho$};
		
		\draw[orange] (3, -3) to [out=180, in=195] (4, 0);
		\draw node at (2.2, -3) {$\Gamma$};
		\draw[orange, dashed] (4, 0) to [out=0, in=360] (3, -3);
		\draw node at (3.5, -1.5) {$G$};
		
		\fill [purple] (-2.5, -2.8) circle (0.06);
		\draw node at (-2.2, -3) {$z_0$};
		
		\pgfmathsetseed{40}	 
		\clip (0, 0) circle (2.65cm);
		\foreach \p in {1,...,20}
		{\fill [red] (3*rand, 3*rand) circle (0.06);
		}
		
		\pgfmathsetseed{21}			
		\draw[brown] plot[brown, smooth cycle, samples=8, domain={1:8}] (\x*360/7+2*rnd:0.1cm+1.6cm*rnd) [shading=ball, outer color=brown, inner color=white]; 
		\draw node at (0, 0) {$D$};
		
	\end{tikzpicture}
	\caption{An illustration of the phaseless co-inversion problem. }\label{fig:setup}
\end{figure}
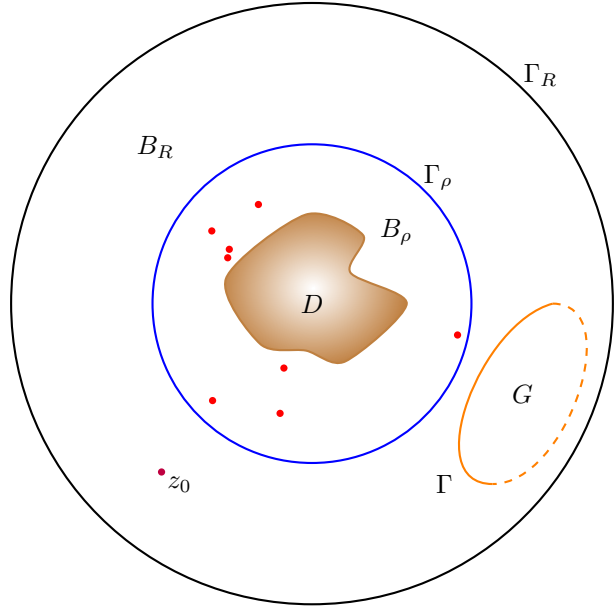

\section{Uniqueness}\label{sec: uniqueness}

In this section, we present the uniqueness result on \cref{prob:co-inversion}, which shows that the location of the point sources, the location and shape $\partial D$, as well as the boundary condition $\mathscr{B}$ for the obstacle, can be simultaneously and uniquely determined from the modulus of near-fields.

\begin{theorem}\label{Thm2.1}
	Let $D_1$ and $D_2$ be two obstacles with boundary conditions $\mathscr{B}_1$ and $\mathscr{B}_2$, and $P_1$ and $P_2$ be two sets of source points, such that $\cup_{j=1}^2 (D_j\cup P_j)\subset B_\rho\subset B_R$. Assume that $\Gamma$ is an admissible curve for $G$ with $G\subset B_R\backslash\overline{B_\rho}$. The scattered field and the total field with respect to $D_j \cup P_j$ and $D_j\cup \{z\}$ are denoted by $u_j^s(x; P_j)$, $u_j(x; P_j)$,  $u_j^s(x; z)$ and $u_j(x; z), j=1,2$, respectively. If the near-fields satisfy that
	\begin{align}
		|u_1(x; P_1)| & =|u_2(x; P_2)|, \quad \forall x\in \Gamma_R, \label{Thm2.1.1} \\
		|u_1(x; P_1, t_{\ell} z)| & =|u_2(x; P_2, t_{\ell} z)|, \quad \forall (x, z)\in \Gamma_R \times (\Gamma_\rho\cup\Gamma), \ \ell=1,2, \label{Thm2.1.2} \\
		|u_1(x; P_1, t_{\ell} z_0)| & =|u_2(x; P_2, t_{\ell} z_0)|, \quad \forall x\in \Gamma_R, \ \ell=1,2,  \label{Thm2.1.3} \\
		|u_1(x; P_1, t_{\ell} z_0, t_{\ell} z)| & =|u_2(x; P_2, t_{\ell} z_0, t_{\ell} z)|, \quad \forall (x, z)\in \Gamma_R \times (\Gamma_\rho\cup\Gamma), \ {\ell}=1,2,  \label{Thm2.1.4}
	\end{align}
	for an arbitrarily fixed wavenumber $k$, a fixed $z_0\in \mathbb{R}^2\backslash \overline{B_\rho}$ and two positive constants $t_1, t_2$ with $t_1\neq t_2$. Then we have  $D_1=D_2, \mathscr{B}_1 = \mathscr{B}_2$ and $P_1=P_2$.
\end{theorem}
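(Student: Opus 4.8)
The plan is to first decode the phaseless hypotheses into phased data and then to split the inverse problem, as outlined in the introduction, into an inverse obstacle scattering subproblem (which delivers $D_1=D_2$ and $\mathscr{B}_1=\mathscr{B}_2$) followed by an inverse point-source subproblem (which delivers $P_1=P_2$). For the first step, fix $x\in\Gamma_R$ and a reference location $z\in\Gamma_\rho\cup\Gamma$, and abbreviate $a_j=u_j(x;P_j)$, $c_j=u_j(x;z_0)$, $g_j=u_j(x;z)$. Expanding every hypothesis through $|p+t_\ell q|^2=|p|^2+2t_\ell\operatorname{Re}(p\bar q)+t_\ell^2|q|^2$ and exploiting that $t_1\ne t_2$ are positive, I would combine \eqref{Thm2.1.1} with \eqref{Thm2.1.3} to obtain $|c_1|=|c_2|$ and $\operatorname{Re}(a_1\bar c_1)=\operatorname{Re}(a_2\bar c_2)$, then \eqref{Thm2.1.1} with \eqref{Thm2.1.2} to obtain $|g_1|=|g_2|$ and $\operatorname{Re}(a_1\bar g_1)=\operatorname{Re}(a_2\bar g_2)$, and finally feed all of this into \eqref{Thm2.1.4} to obtain $\operatorname{Re}(c_1\bar g_1)=\operatorname{Re}(c_2\bar g_2)$. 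Thus, on $\Gamma_R$, one knows the moduli $|u_1(\cdot;P_1)|=|u_2(\cdot;P_2)|$, $|u_1(\cdot;z_0)|=|u_2(\cdot;z_0)|$, $|u_1(\cdot;z)|=|u_2(\cdot;z)|$ together with the real parts of the three cross-correlations $u_1(\cdot;P_1)\overline{u_1(\cdot;z_0)}$, $u_1(\cdot;P_1)\overline{u_1(\cdot;z)}$, $u_1(\cdot;z_0)\overline{u_1(\cdot;z)}$ and their index-$2$ analogues, for every $z\in\Gamma_\rho\cup\Gamma$.

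Next I would upgrade these real-part identities to the phased identities $u_1(x;P_1)=u_2(x;P_2)$, $u_1(x;z_0)=u_2(x;z_0)$ and $u_1(x;z)=u_2(x;z)$ on $\Gamma_R$, valid for all $z\in\Gamma_\rho\cup\Gamma$. The real-part data is invariant under replacing the whole index-$1$ family by $\mathrm{e}^{\mathrm{i}\eta(x)}$ times itself, so the argument has two parts. First, using the three cross-correlation identities together with analyticity and reciprocity $u_j(x;z)=u_j(z;x)$ — and the admissibility of $\Gamma$ (its analyticity and the non-Dirichlet-eigenvalue condition, which make $\{u_j(\cdot;z)\}_{z\in\Gamma}$ rich enough to admit no nontrivial multiplicative relation) — one shows that any such modulating factor must reduce to a single unimodular constant $\mathrm{e}^{\mathrm{i}\eta_0}$. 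Second, that constant is forced to be $1$ by the reference sources themselves: for $z\in\Gamma$ the identity $u_2(\cdot;z)=\mathrm{e}^{\mathrm{i}\eta_0}u_1(\cdot;z)$ continues analytically into $\{\rho<|x|<R\}\setminus\{z\}$, and since $u_j(x;z)=\frac{\mathrm{i}}{4}H_0^{(1)}(k|x-z|)+u_j^s(x;z)$ with $u_j^s(\cdot;z)$ analytic across $x=z$, matching the obstacle-independent logarithmic singularity at $x=z$ on both sides forces $\mathrm{e}^{\mathrm{i}\eta_0}=1$. This is the step I expect to be the main obstacle: it is precisely where the lost phase is recovered and where all the auxiliary illuminations and the admissible curve are genuinely used, and it parallels the reference-source phase-retrieval arguments of \cite{ZGSL20,ZGLL18,ZWGL20}.

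With the phased point-source near-field identity $u_1(\cdot;z)=u_2(\cdot;z)$ on $\Gamma_R$ in hand for every $z\in\Gamma_\rho$, subtracting the common incident field $u^i(\cdot;z)$ and applying unique continuation across $\{\rho<|x|<R\}$ gives $u_1^s(\cdot;z)=u_2^s(\cdot;z)$ in the unbounded component of $\mathbb{R}^2\setminus(\overline{D_1}\cup\overline{D_2})$ for all $z$ on the curve $\Gamma_\rho$, which encloses both obstacles. The well-established uniqueness theory for inverse obstacle scattering with such a rich family of incident point sources (Kirsch--Kress / singular-source type arguments; see, e.g., \cite{CK19}) then yields $D_1=D_2=:D$ without prior knowledge of the boundary operators. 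With the shape known and the scattered fields coinciding, the common total field $u:=u_1(\cdot;z)$ satisfies both $\mathscr{B}_1u=0$ and $\mathscr{B}_2u=0$ on $\partial D$; if $\mathscr{B}_1\ne\mathscr{B}_2$ this over-determines the Cauchy data of $u$ (a Dirichlet/impedance pair or two distinct impedance parameters forces $u=\partial_\nu u=0$ on $\partial D$), so Holmgren's theorem gives $u\equiv0$ in $\mathbb{R}^2\setminus\overline{D}$, which contradicts the analyticity of $u^s(\cdot;z)$ at the source point $z\in\Gamma_\rho$. Hence $\mathscr{B}_1=\mathscr{B}_2$.

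Finally, since $D_1=D_2=D$ and $\mathscr{B}_1=\mathscr{B}_2$, identical incident fields produce identical scattered fields; combining this with $u_1(\cdot;P_1)=u_2(\cdot;P_2)$ on $\Gamma_R$ from the phase-retrieval step and unique continuation gives $u_1(\cdot;P_1)=u_2(\cdot;P_2)$ in $\mathbb{R}^2\setminus\overline{B_\rho}$, and then, by analytic continuation in the connected open set $(\mathbb{R}^2\setminus\overline{D})\setminus(P_1\cup P_2)$, up to the source points. If $P_1\ne P_2$, choose $z^\star\in P_1\setminus P_2$: then $u_1(\cdot;P_1)$ carries the logarithmic singularity $\frac{\mathrm{i}}{4}H_0^{(1)}(k|x-z^\star|)$ at $z^\star$, whereas $u_2(\cdot;P_2)$ is real-analytic near $z^\star$ because $z^\star\notin\overline{D}\cup P_2$ — a contradiction. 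Therefore $P_1=P_2$, which completes the proof.
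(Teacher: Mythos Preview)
Your overall strategy matches the paper's, and your first and last paragraphs (decoding the phaseless hypotheses via $t_1\neq t_2$, and the singularity argument for $P_1=P_2$) are essentially the paper's arguments. The genuine gap is in the phase-retrieval step. You assert that the only ambiguity in passing from the modulus and real-part data to phased identities is a unimodular factor $\mathrm{e}^{\mathrm{i}\eta(x)}$, which you then pin down via reciprocity, admissibility, and singularity matching. But the data $|g_1|=|g_2|$, $|c_1|=|c_2|$, $\mathrm{Re}(c_1\bar g_1)=\mathrm{Re}(c_2\bar g_2)$ admits a second invariance: it is equally compatible with $u_1(x;z)=\mathrm{e}^{\mathrm{i}\eta(x)}\,\overline{u_2(x;z)}$. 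The paper treats this conjugate branch as a separate case. Your singularity-matching idea does not eliminate it: the leading logarithmic parts of $\tfrac{\mathrm{i}}{4}H_0^{(1)}(k|x-z|)$ and of $-\tfrac{\mathrm{i}}{4}\,\overline{H_0^{(1)}(k|x-z|)}$ are both $-\tfrac{1}{2\pi}\ln|x-z|$, so matching at $x=z$ only forces $\mathrm{e}^{\mathrm{i}\eta}=1$ and leaves you with $u_1(\cdot;z)=\overline{u_2(\cdot;z)}$, not $u_1=u_2$. The paper kills this branch by a far-field argument: once the relation is extended to all $z$, letting $z=|z|\hat z\to\infty$ the two sides carry the phases $\mathrm{e}^{\mathrm{i}k|z|}$ and $\mathrm{e}^{-\mathrm{i}k|z|}$, which would force $\mathrm{e}^{2\mathrm{i}k|z|}$ to converge --- a contradiction. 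This is also where the admissible curve $\Gamma$ and the domain $G$ are actually used (to continue the relation from $z\in\Gamma$ to $z\in\partial G$ and then, via the non-Dirichlet-eigenvalue hypothesis, through $G$ to all $z$); they are not used, as your sketch suggests, to reduce the factor to a constant --- in the rotation branch the constancy of $\gamma(x)$ comes from reciprocity with $z\in\Gamma_\rho$ alone.

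A smaller structural difference: the paper does not obtain $u_1(x;P_1)=u_2(x;P_2)$ in the same pass. It first establishes $u_1(\cdot;z)=u_2(\cdot;z)$ for $z\in\Gamma_\rho$, then passes to far-field patterns and (via mixed reciprocity) plane-wave scattering to invoke the standard uniqueness theorem in \cite{CK19} for both $D$ and $\mathscr{B}$; only afterwards, with a common $u(\cdot;z)$, does it return to $\mathrm{Re}\bigl(u(x;P_1)\overline{u(x;z)}\bigr)=\mathrm{Re}\bigl(u(x;P_2)\overline{u(x;z)}\bigr)$ and run a second rotation/conjugate dichotomy for the $P_j$-fields, again using $\Gamma$ to eliminate the conjugate branch. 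Your plan to extract all three phased identities simultaneously is not unreasonable, but it would face the same missing conjugate case.
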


\begin{proof}
The proof is divided into two parts.

(i) {\bf Uniqueness for the obstacle.} From \cref{Thm2.1.1} and \cref{Thm2.1.2}, we have for all $x\in \Gamma_R, z\in \Gamma_\rho\cup\Gamma$ and $\ell=1, 2$,
$$
	t_{\ell}\left( | u_1(x; z)|^2-|u_2(x; z)|^2\right)= 2 \mathrm{Re}\left(u_2(x; P_2)\overline{u_2(x; z)}-u_1(x;P_1)\overline{u_1(x; z)}\right)
$$
where the overline denotes the complex conjugate. Further, by  $t_{\ell} >0 $ $(\ell=1,2)$ and $t_1\neq t_2$, we obtain
\begin{align}
	| u_1(x; z)| & =|u_2(x; z)|, \quad \forall (x, z)\in \Gamma_R \times (\Gamma_\rho\cup\Gamma), \label{Thm2.1.5} \\ 
	\mathrm{Re}\left(u_1(x; P_1)\overline{u_1(x; z)}\right) & =\mathrm{Re}\left(u_2(x; P_2)\overline{u_2(x; z)}\right), \ \forall (x, z)\in \Gamma_R \times (\Gamma_\rho\cup\Gamma). \label{Thm2.1.6}
\end{align}
Similarly, from \cref{Thm2.1.1}, \cref{Thm2.1.3} and \cref{Thm2.1.4}, we deduce that
\begin{align}
	|u_1(x; z_0)| & =|u_2(x; z_0)|, \quad \forall x\in \Gamma_R , \label{Thm2.1.7} \\
	|u_1(x; z, z_0)|&=|u_2(x; z, z_0)|, \quad \forall (x, z)\in \Gamma_R \times (\Gamma_\rho\cup\Gamma). \label{eq:equality_z_z0}
\end{align}

By using \cref{Thm2.1.5}, \cref{Thm2.1.7} and \cref{eq:equality_z_z0}, it can be seen that for all $x\in \Gamma_R, z\in \Gamma_\rho\cup\Gamma$,
$$
	\mathrm{Re}\left(u_1(x; z_0)\overline{u_1(x; z)}\right)=\mathrm{Re}\left(u_2(x; z_0)\overline{u_2(x; z)}\right).
$$
Then, following a similar argument of (12) and (13) in Theorem 2.2 in \cite{ZGSL20}, we know that
\begin{equation}\label{Thm2.1.8}
	u_1(x; z)=\mathrm{e}^{\mathrm{i}\gamma(x)}u_2(x; z),\quad \forall (x, z)\in \Gamma_R^0 \times (\Gamma_\rho^0\cup\Gamma^0)
\end{equation}
or
\begin{equation}\label{Thm2.1.9}
	u_1(x; z)=\mathrm{e}^{\mathrm{i}\eta(x)}\overline{u_2(x; z)},\quad \forall (x, z)\in \Gamma_R^0 \times (\Gamma_\rho^0\cup\Gamma^0),
\end{equation}
where $\gamma(x), \eta(x)$ are real-valued functions, and $\Gamma_R^0\subseteq \Gamma_R$, $\Gamma_\rho^0\subseteq \Gamma_\rho, \Gamma^0\subseteq\Gamma$ are open sets.

First, we consider the case \cref{Thm2.1.8}. From the reciprocity relation \cite[Theorem 3]{Athanasiadis} for point sources, we see
$$
\mathrm{e}^{\mathrm{i}\gamma(z)}u_2(z; x)=u_1(z; x)=u_1(x; z)=	\mathrm{e}^{\mathrm{i}\gamma(x)}u_2(x; z),\quad \forall (x, z)\in \Gamma_R^0 \times \Gamma^0_\rho,
$$
which yields $\gamma(x)=\gamma$, where $\gamma$ is a constant.
Then, the analyticity of $u_j(x; z) (j = 1, 2)$ with respect to $x$ leads to $u_1(x; z)=\mathrm{e}^{\mathrm{i}\gamma}u_2(x; z)$ for all $(x, z)\in \Gamma_R \times \Gamma^0_\rho$. Then, from the uniqueness of the obstacle scattering problem in $\mathbb{R}^2\backslash\overline{B_R}$ and the analyticity of $u_j(x; z) (j = 1, 2)$, we have
$$
u_1(x; z)=\mathrm{e}^{\mathrm{i}\gamma}u_2(x; z),\quad \forall x \in   \mathbb{R}^2\backslash (\overline{D_1\cup D_2}\cup \{z\}),
$$
i.e., for all $ x\in  \mathbb{R}^2\backslash (\overline{D_1\cup D_2}\cup \{z\})$,
$$
u_1^s(x; z)+\frac{\mathrm{i}}{4}H_0^{(1)}(k|x-z|)=\mathrm{e}^{\mathrm{i}\gamma}\left(u_2^s(x; z)+\frac{\mathrm{i}}{4}H_0^{(1)}(k|x-z|)\right).
$$

Now, by letting $x\to z$ and the boundedness of the scattered field $u_j^s(x; z) (j=1,2)$, we find $\mathrm{e}^{\mathrm{i}\gamma}=1$, and thus the far-field patterns coincide, i.e.
$$
u_1^\infty(\hat{x}; z)= u_2^\infty(\hat{x}; z),\quad  \forall (z, \hat{x})\in \Gamma_\rho^0 \times  \mathbb{S}^1,
$$
which, together with the mixed reciprocity relation \cite[Theorem 3.24]{CK19}, yields
$$
v_1^s(z; -\hat{x})=v_2^s(z; -\hat{x}),\quad  \forall  (z, \hat{x})\in \Gamma_\rho^0 \times  \mathbb{S}^1,
$$
where $v_j^s(x; d), x\in\mathbb{R}^2\backslash\overline{D_j}, d\in \mathbb{S}^1$, is the scattered field generated by the obstacle $D$ and the incident plane wave $v^i(x; d)=\mathrm{e}^{\mathrm{i} k x\cdot d}, j=1,2$. 
Further, from the analyticity of $v_j^s(x; d)(j = 1, 2)$ with respect to $x$, we have $v_1^s(x; d)=v_2^s(x; d)$ for all $x\in \Gamma_\rho, d \in \mathbb{S}^1$. And the uniqueness of the exterior scattering in $\mathbb{R}^2\backslash\overline{B_R}$ implies that
\begin{equation}\label{Thm2.1.10}
	v_1^\infty(\hat{x}; d)=v_2^\infty(\hat{x}; d),\quad \forall \hat{x}, d \in \mathbb{S}^1.
\end{equation}

Next, we are going to show that the case \cref{Thm2.1.9} does not hold. Suppose that \cref{Thm2.1.9} holds. From the reciprocity relation \cite[Theorem 3]{Athanasiadis} for point sources, we see
$$
u_1(z; x)=\mathrm{e}^{\mathrm{i}\eta(x)}\overline{u_2(z; x)},\quad \forall(x, z)\in \Gamma_R^0 \times \Gamma^0.
$$
Then, the analyticity of $u_j(z; x) (j = 1, 2)$ with respect to $z$ leads to $u_1(z; x)=\mathrm{e}^{\mathrm{i}\eta(x)}\overline{u_2(z; x)}$ for all $(x, z)\in \Gamma_R^0 \times \partial G$. Let $w(z; x)=u_1(z; x)-\mathrm{e}^{\mathrm{i}\eta(x)}\overline{u_2(z; x)}$, then
\begin{equation*}
	\begin{cases}
		\Delta w+ k^2 w = 0  & \text{in}\ G, \\
		\qquad\quad\ \ w =0 & \text{on}\ \partial G.
	\end{cases}
\end{equation*}
By the assumption of $G$ that $k^2$ is not a Dirichlet eigenvalue of $-\Delta$ in $G$, we obtain $w=0$ in $G$. Again, from the analyticity of $u_j(z; x)(j = 1, 2)$ with respect to $z$, it can be seen that for every $x\in \Gamma_R^0$,
$$
u_1(z; x)=\mathrm{e}^{\mathrm{i}\eta(x)}\overline{u_2(z; x)},\quad \forall z\in  \mathbb{R}^2\backslash (\overline{D_1\cup D_2}\cup \{x\}),
$$
i.e., for all $ z\in  \mathbb{R}^2\backslash(\overline{D_1\cup D_2}\cup \{x\})$,
$$
u_1^s(z; x)+\frac{\mathrm{i}}{4}H_0^{(1)}(k|x-z|)=\mathrm{e}^{\mathrm{i}\eta(x)}\left(\overline{u_2^s(z; x)}-\frac{\mathrm{i}}{4}\overline{H_0^{(1)}(k|x-z|)}\right).
$$
Now, by letting $z\to x$ and the boundedness of the scattered field $u_j^s(z; x) (j=1,2)$, we find $\mathrm{e}^{\mathrm{i}\gamma(x)}=1$,
and
$$
u_1(z; x)=\overline{u_2(z; x)},\quad \forall z\in  \mathbb{R}^2\backslash (\overline{D_1\cup D_2}\cup \{x\}).
$$
By taking $z=|z|\hat{z}$ and using the definition of far-field pattern (see \cite[Theorem 2.6]{CK19}), we obtain
\begin{align*}
	\lim\limits_{|z|\to\infty}|z|\mathrm{e}^{-\mathrm{i} k |z|}u_1(|z|\hat{z}; x) & =u_1^\infty(\hat{z}; x), \\
	\lim\limits_{|z|\to\infty}|z|\mathrm{e}^{\mathrm{i} k |z|}\overline{u_2(|z|\hat{z}; x)} & =\overline{u_2^\infty(\hat{z}; x)}.
\end{align*}
Noticing $u_1(|z|\hat{z}; x)=\overline{u_2(|z|\hat{z}; x)}$ and $u_1^\infty(\hat{z}; x)\neq 0$ for $\hat{z}\in S$ with some $S\subset \mathbb{S}^1$, we have
$$
\lim\limits_{|z|\to\infty}\mathrm{e}^{2\mathrm{i} k |z|}=\frac{\overline{u_2^\infty(\hat{z}; x)}}{ u_1^\infty(\hat{z}; x)},
$$
which is a contradiction. Hence, the case \cref{Thm2.1.9} does not hold.

Having verified \cref{Thm2.1.10}, by Theorem 5.6 in \cite{CK19}, we have $D_1=D_2$ and $\mathscr{B}_1 = \mathscr{B}_2$.

(ii) {\bf Uniqueness for the point sources.}

In the following, we will show that $P_1=P_2$. Since the obstacle and its boundary condition are uniquely determined, we know $u_1(x; z)=u_2(x; z)=: u(x; z)$ on $\Gamma_R$ for $z\in \Gamma_\rho\cup\Gamma$. From \cref{Thm2.1.6}, we have
\begin{equation}\label{Thm2.1.11}
	\mathrm{Re}\left(u(x; P_1)\overline{u(x; z)}\right)=\mathrm{Re}\left(u(x; P_2)\overline{u(x; z)}\right),\quad  \forall (x, z) \in \Gamma_R\times (\Gamma_\rho\cup\Gamma).
\end{equation}
According to \cref{Thm2.1.1}, we denote
$$
	u(x; P_j)=r(x)\mathrm{e}^{\mathrm{i} \alpha_j(x)},\quad u(x; z)=s(x, z)\mathrm{e}^{\mathrm{i} \beta(x, z)},
$$
where $r(x)=|u(x; P_j)|, s(x, z)=|u(x; z)|, \alpha_j(x)$ and $\beta(x, z)$ are real-valued functions, $j=1,2$.

Since $\Gamma$ is an admissible curve of $G$, by \Cref{def:admissible_curve}, the reciprocity relation \cite[Theorem 3]{Athanasiadis} and the analyticity of $u(z; x)$ with respect to $z$, we have $u(x; z)\not\equiv 0$ for $x\in \Gamma_R, z\in \Gamma$. Then, the continuity leads to $u(x; z)\neq 0, \forall x\in \Lambda_1, z\in \Sigma$, where $\Lambda_1\subseteq \Gamma_R$ and $\Sigma\subseteq \Gamma$ are open sets.  Similarly, by the analyticity and continuity of $u(x; P_j)(j=1,2)$ with respect to $x$, we deduce $u(x; P_j)\neq 0$ for $x\in \Lambda$ with an open set $\Lambda\subseteq \Lambda_1$. Therefore, $r(x)\neq 0, s(x, z)\neq 0, \forall (x, z)\in \Lambda\times \Sigma$.  Taking \cref{Thm2.1.11} into account, we derive that
$$
	\cos (\alpha_1(x)-\beta(x, z))=\cos(\alpha_2(x)-\beta(x, z)),\quad \forall (x, z)\in \Lambda\times \Sigma.
$$
Hence, either
\begin{equation}\label{Thm2.1.12}
	\alpha_1(x)=\alpha_2(x)+2m\pi,\quad \forall (x, z)\in \Lambda\times \Sigma
\end{equation}
or
\begin{equation}\label{Thm2.1.13}
	\alpha_1(x)=-\alpha_2(x)+2\beta(x, z)+2m\pi,\quad \forall (x, z)\in \Lambda\times \Sigma
\end{equation}
holds with some $m \in \mathbb{Z}$.

For case \cref{Thm2.1.12}, it is easy to see that for all $(x, z)\in \Lambda\times \Sigma$,
$$
	u(x; P_1)=r(x)\mathrm{e}^{\mathrm{i} \alpha_1(x)} =r(x)\mathrm{e}^{\mathrm{i} \alpha_2(x)+\mathrm{i}2m\pi}=u(x; P_2),
$$
which, together with the analyticity of $u(x; P_j)(j=1,2)$ and the uniqueness of the obstacle scattering problem in $\mathbb{R}^2\backslash \overline{B_R}$, yields
$$
	u(x; P_1)=u(x; P_2), \quad \forall x\in \mathbb{R}^2\backslash \overline{B_R}.
$$
Again, by the analyticity of $u(x; P_j), j=1,2$, we obtain
$$
	u(x; P_1)=u(x; P_2), \quad \forall x\in \mathbb{R}^2\backslash (\overline{D}\cup P_1\cup P_2),
$$
i.e. for all $x\in \mathbb{R}^2\backslash (\overline{D}\cup P_1\cup P_2)$,
$$
	u^s(x; P_1)+u^i(x; P_1)=u^s(x; P_2)+u^i(x; P_2).
$$

Suppose $P_1\neq P_2$. Then, there exists a source point $z^*$ such that $z^*\in P_1$ and $z^*\notin P_2$.
By the boundedness of $u^s(x; P_j)(j=1,2)$ and $z^*\notin P_2$ we have that $u(x; P_2)=u^s(x; P_2)+u^i(x; P_2)$ is bounded as $x\to z^*$. This is a contradiction since $z^*\in P_1$ and $u(x; P_1)=u^s(x; P_1)+u^i(x; P_1)$ is unbounded as $x\to z^*$. Hence, we have $P_1= P_2$.

Finally, we will show that the case \cref{Thm2.1.13} does not hold. Suppose that \cref{Thm2.1.13} is true, then we have for all $(x, z)\in \Lambda\times \Sigma$,
\begin{align*}
	u(x; P_1)\overline{u(x; z)}&=r(x)\mathrm{e}^{\mathrm{i} \alpha_1(x)} s(x, z)\mathrm{e}^{-\mathrm{i} \beta(x, z)}\\
	& = r(x)\mathrm{e}^{-\mathrm{i} \alpha_2(x)+\mathrm{i}2m\pi}s(x, z)\mathrm{e}^{\mathrm{i} \beta(x, z)}\\
	&=\overline{u(x; P_2)}u(x; z).
\end{align*}

By the reciprocity relation \cite[Theorem 3]{Athanasiadis} and $r(x)\neq 0, s(x, z)\neq 0, \forall (x, z)\in \Lambda\times \Sigma$, we deduce that for $x\in \Lambda$,
$$
	u(z; x)=\frac{u(x; P_1)\overline{u(z; x)}}{\overline{u(x; P_2)}},\quad \forall z\in \Sigma.
$$
Since $\Sigma\subseteq\Gamma$ and $\Gamma$ is an admissible curve of $G$, by \cref{def:admissible_curve} and the analyticity of $u(z; x)$ with respect to $z$, we obtain that
$$
	u(z; x)=\frac{u(x; P_1)\overline{u(z; x)}}{\overline{u(x; P_2)}},\quad \forall z\in \mathbb{R}^2\backslash (\overline{D}\cup \{x\}).
$$
Then, a contradiction can be derived by a similar discussion of case \cref{Thm2.1.9}. Hence the case \cref{Thm2.1.13} does not hold, which completes the proof.
\end{proof}

\begin{remark}
	We would like to point out that this artificial source method can also be applied to a co-inversion problem with phase information, and the corresponding theory of uniqueness can be established easily since the co-inversion problem can be decoupled linearly by the artificial source technique.
\end{remark}


\section{Imaging algorithms}\label{sec:algorithms}

This section will decouple the phaseless co-inversion problem into two inverse scattering problems: a phaseless inverse obstacle scattering problem and an inverse source problem. Then the subproblems will be solved separately by the direct imaging methods.

\begin{problem}[Phaseless inverse obstacle scattering problem]\label{prob:obstacle1}
	Let $D$ be the impenetrable obstacle with boundary condition $\mathscr{B}$. Given the phaseless near-field data
	\begin{equation*}
		\{|u(x; P)|: x\in \Gamma_R\},  \quad
		\{|u(x; P, t_\ell z)|: x\in \Gamma_R, z\in \Gamma_\rho, \ell=1,2\},
	\end{equation*}
	for a fixed wavenumber $k$ and $t_1=\sigma, t_2=2\sigma, \sigma\ge 1$, reconstruct the location and shape of obstacle $D$.
\end{problem}

We will recover the phaseless data $|u(x; z)|$ for all $x\in \Gamma_R$, $z\in \Gamma_\rho$. From $t_1=\sigma$, $t_2=2\sigma$ and the measurements
\begin{equation*}
	\{|u(x; P)|: x\in \Gamma_R\},  \quad
	\{|u(x; P, t_\ell z)|: x\in \Gamma_R, z\in \Gamma_\rho, \ell=1,2\},
\end{equation*}
we have
\begin{align*}
	|u(x; \sigma z)|^2+ 2\mathrm{Re}(u(x; P)\overline{u(x; \sigma z)})+|u(x; P)|^2 & = |u(x; P, \sigma z)|^2, \\
	4 |u(x; \sigma z)|^2+ 4\mathrm{Re}(u(x; P)\overline{u(x; \sigma z)})+|u(x; P)|^2 & = |u(x; P, 2\sigma z)|^2,
\end{align*}
which implies for all $(x, z) \in \Gamma_R\times \Gamma_\rho$,
\begin{equation}\label{formular}
	|u(x; z)|=\frac{1}{\sqrt{2}\sigma}\sqrt{|u(x; P, 2\sigma z)|^2-2|u(x; P, \sigma z)|^2+|u(x; P)|^2}.
\end{equation}
Here, $\sigma$ is the scaling factor such that $\|u(\cdot; \sigma z)\|_{L^\infty(\Gamma_R)}\approx\|u(\cdot; P)\|_{L^\infty(\Gamma_R)}$ in \eqref{formular}.

With the phaseless data \eqref{formular}, we reconstruct the obstacle $D$ by using the reverse time migration (RTM) approach \cite{CH17}, which is based on the following indicator function
$$
	I_D(y)=-k^2\mathrm{Im}\int_{\Gamma_\rho}\int_{\Gamma_R}\Phi(y, z)\Phi(x, y)\Upsilon(x, z)\mathrm{d} s(x)\mathrm{d} s(z), \quad \forall y\in B_R,
$$
where
$$
	\Upsilon(x, z)=\frac{|u(x; z)|^2-|u^i(x; z)|^2}{u^i(x; z)},\quad \Phi(x, y)=\frac{\mathrm{i}}{4}H_0^{(1)}(k|x-y|).
$$

\begin{problem}[Phaseless inverse source problem]\label{prob:source}
	Given the phaseless near-field data
	$$
		\{|u(x; P)|: x\in \Gamma_R\},  \quad \{|u(x; P, t_\ell z)|: x\in \Gamma_R, z\in \Gamma_\rho, \ell=1,2\},
	$$
	for a fixed wavenumber $k$ and $t_1=\sigma$, $t_2=2\sigma$, determine the source points $P$.
\end{problem}

Let
\begin{align*}
	\Theta(x, z, P)=\frac{1}{\sigma}\left(2|u(x; P, \sigma z)|^2-\frac{1}{2}|u(x; P, 2\sigma z)|^2-\frac{3}{2}|u(x; P)|^2\right).
\end{align*}
We introduce the following indicator function for the reconstruction of $P$
$$
	I_P(y)=\frac{1}{\rho\sqrt{R}} \mathrm{Re}\int_{\Gamma_\rho}\int_{\Gamma_R}\mathrm{e}^{\mathrm{i}(k(|x-y|-|x-z|))}|x-z|^{\frac12}\Theta(x, z, P)\mathrm{d} s(x)\mathrm{d} s(z), \quad \forall y\in B_\rho\backslash \overline{D}.
$$

To illustrate the behavior of the indicator function, we assume that $R=c_0\rho$ with some $c_0>1$, and denote $d_0=\inf\limits_{z\in P,\, x\in\partial D}|z-x|$.

From \eqref{formular}, we see that
\begin{align*}
	\Theta(x, z, P) & =\frac{1}{\sigma}\left(|\sigma u(x; z)+u(x; P)|^2-|\sigma u(x; z)|^2 -|u(x; P)|^2\right)\\
	& =  \overline{u(x; P)}u(x; z)+  u(x; P)\overline{u(x; z)}.
\end{align*}
Therefore,
\begin{align*}
	I_P(y)&= \frac{1}{\rho\sqrt{R}}\ \mathrm{Re}\int_{\Gamma_\rho}\int_{\Gamma_R}\mathrm{e}^{\mathrm{i}k(|x-y|-|x-z|)}|x-z|^{\frac{1}{2}}\overline{u(x;P)}u(x; z)\mathrm{d}s(x)\mathrm{d}s(z)\\
	&\quad+  \frac{1}{\rho\sqrt{R}}\ \mathrm{Re}\int_{\Gamma_\rho}\int_{\Gamma_R}\mathrm{e}^{\mathrm{i}k(|x-y|-|x-z|)}|x-z|^{\frac{1}{2}}u(x;P)\overline{u(x; z)}\mathrm{d}s(x)\mathrm{d}s(z)\\
	&=\mathrm{Re}\ I_1(y)+\mathrm{Re}\ I_2(y).
\end{align*}

It is clear that
\begin{align*}
	I_1(y)&= \frac{1}{\rho\sqrt{R}}\int_{\Gamma_\rho}\int_{\Gamma_R}\mathrm{e}^{\mathrm{i}k(|x-y|-|x-z|)}|x-z|^{\frac{1}{2}}\overline{u^i(x; P)}u^i(x; z)\mathrm{d}s(x)\mathrm{d}s(z)\\
	&\quad+ \frac{1}{\rho\sqrt{R}}\int_{\Gamma_\rho}\int_{\Gamma_R}\mathrm{e}^{\mathrm{i}k(|x-y|-|x-z|)}|x-z|^{\frac{1}{2}}\overline{u^i(x;P)}u^s(x; z)\mathrm{d}s(x)\mathrm{d}s(z)\\
	&\quad+ \frac{1}{\rho\sqrt{R}}\int_{\Gamma_\rho}\int_{\Gamma_R}\mathrm{e}^{\mathrm{i}k(|x-y|-|x-z|)}|x-z|^{\frac{1}{2}}\overline{u^s(x;P)}u^i(x; z)\mathrm{d}s(x)\mathrm{d}s(z)\\
	&\quad+\frac{1}{\rho\sqrt{R}}\int_{\Gamma_\rho}\int_{\Gamma_R}\mathrm{e}^{\mathrm{i}k(|x-y|-|x-z|)}|x-z|^{\frac{1}{2}}\overline{u^s(x;P)}u^s(x; z)\mathrm{d}s(x)\mathrm{d}s(z)\\
	&\triangleq I_{11}+I_{12}+I_{13}+I_{14}.
\end{align*}

From
$$
H_0^{(1)}(t)=\sqrt{\frac{2}{\pi t}}\mathrm{e}^{\mathrm{i}(t-\frac{\pi}{4})}\left(1+\mathcal{O}\left(\frac{1}{t}\right)\right), \quad t\to\infty,
$$
we have
\begin{align*}
	&\quad\mathrm{e}^{\mathrm{i}(k|x-y|-k|x-z|)}|x-z|^{\frac{1}{2}}\overline{u^i(x; z_j)}u^i(x; z)\\
	&=\mathrm{e}^{\mathrm{i}(k|x-y|-\frac{\pi}{4})}\overline{u^i(x; z_j)} \ \mathrm{e}^{-\mathrm{i}(k|x-z|-\frac{\pi}{4})}|x-z|^{\frac{1}{2}}u^i(x; z) \\
	&=\mathrm{e}^{\mathrm{i}(k|x-y|-\frac{\pi}{4})}\overline{\frac{\mathrm{i}}{4}H_0^{(1)}(k|x-z_j|)}\ \mathrm{e}^{-\mathrm{i}(k|x-z|-\frac{\pi}{4})}|x-z|^{\frac{1}{2}} \frac{\mathrm{i}}{4}H_0^{(1)}(k|x-z|)\\
	&=\frac{1}{16}\mathrm{e}^{\mathrm{i} k(|x-y|-|x-z_j|)}\sqrt{\frac{2}{\pi k|x-z_j|}}\left(1+\mathcal{O}\left(\frac{1}{|x-z_j|}\right)\right)\sqrt{\frac{2}{\pi k}}\left(1+\mathcal{O}\left(\frac{1}{|x-z|}\right)\right)\\
	&=\frac{1}{8k\pi} \frac{\mathrm{e}^{\mathrm{i} k(|x-y|-|x-z_j|)}}{\sqrt{|x-z_j|}} 
	+\mathcal{O}\left(R^{-\frac{3}{2}}\right),
\end{align*}
and thus,
\begin{align}\label{estmate1}
	I_{11}(y)=\frac{1}{4k\sqrt{R}}\sum_{j=1}^{N} \int_{\Gamma_R}\frac{\mathrm{e}^{\mathrm{i} k(|x-y|-|x-z_j|)}}{\sqrt{|x-z_j|}}\mathrm{d} s(x) +\mathcal{O}\left({R^{-1}}\right).
\end{align}

From $|u^s(x; z)|=\mathcal{O}(R^{-1})$ and $|u^s(x; z_j)|=\mathcal{O}(R^{-\frac{1}{2}}d_0^{-\frac{1}{2}})(j=1,\cdots,N)$ for $x\in \Gamma_R$, $z\in \Gamma_\rho$, it can be seen that
\begin{align*}
	\overline{u^i(x; P)}u^s(x; z) & =\mathcal{O}\left(R^{-\frac{3}{2}}\right), \\
	\overline{u^s(x; P)}u^s(x; z) & =\mathcal{O}\left(R^{-\frac{3}{2}}d_0^{-\frac{1}{2}}\right),\\
	\overline{u^s(x;P)}u^i(x; z)  &  =\mathcal{O}\left(R^{-1}d_0^{-\frac{1}{2}}\right),
\end{align*}
which, together with \eqref{estmate1}, yields
\begin{equation} \label{estmate2}
	I_1(y)= \frac{1}{4k\sqrt{R}}\sum_{j=1}^N \int_{\Gamma_R}\frac{\mathrm{e}^{\mathrm{i} k(|x-y|-|x-z_j|)}}{\sqrt{|x-z_j|}}\mathrm{d} s(x)+\mathcal{O}\left(R^{-\frac{1}{2}}+d_0^{-\frac{1}{2}}\right).
\end{equation}
Similarly, we have 
\begin{equation} \label{estmate3}
	I_2(y)= \frac{1}{8k\pi \rho\sqrt{R}}\sum_{j=1}^N \int_{\Gamma_\rho}\int_{\Gamma_R}\frac{\mathrm{e}^{\mathrm{i} k(|x-y|+|x-z_j|-2|x-z|)}}{\sqrt{|x-z_j|}}\mathrm{d} s(x)\mathrm{d} s(z) +\mathcal{O}\left(R^{-\frac{1}{2}}+d_0^{-\frac{1}{2}}\right).
\end{equation}
Hence, 
\begin{align*} 
	I_{P}(y) & = \frac{1}{4k\sqrt{R}}\sum_{j=1}^N \int_{\Gamma_R}\frac{\cos (k|x-y|-k|x-z_j|)}{\sqrt{|x-z_j|}}\mathrm{d} s(x) \\
	&\quad+\frac{1}{8k\pi \rho\sqrt{R}}\sum_{j=1}^N \int_{\Gamma_\rho}\int_{\Gamma_R}\frac{\cos (k|x-y|+k|x-z_j|-2k|x-z|))}{\sqrt{|x-z_j|}}\mathrm{d} s(x)\mathrm{d} s(z) \\ 
	&\quad +\mathcal{O}\left(R^{-\frac{1}{2}}+d_0^{-\frac{1}{2}}\right).
\end{align*}

In virtue of the above proof, function $I_P(y)$ should decay as the sampling
point $y$ recedes from the corresponding source point $z_j$. And thus the source points $P$ can be recovered by locating the significant local maximizers of the indicator $I_P(y)$ over a suitable sampling region that covers $P$.


\section{Numerical examples}\label{sec:examples}

In this section, we present several numerical examples to demonstrate that our approach is effective for the reconstruction of sources and obstacles from phaseless measurements. Synthetic forward data is computed by the boundary integral equation method. To test the stability of our co-inversion scheme, we add some random perturbations to the synthetic data such that 
$$
|u^\delta|=|u|+\delta r|u| 
$$
where $r$ is a uniform random number ranging from $-1$ to 1, and $\delta>0$ denotes the noise level. 

\begin{example}
In the first example, we consider the reconstruction of a sound-soft starfish-shaped obstacle whose boundary is given by the parametric form
$$
x(t)=(1+0.2\cos 5t)(\cos t, \sin t),\quad 0\le t<2\pi.
$$
In Figure \ref{fig: starfish}, we illustrate the reconstruction of the obstacle together with 4 sources located at $(3, 1), (2, 2), (-1.5, 3)$ and $(-2.5, -1.8)$. The wavenumber is fixed to be $k=4\pi$ and the scaling parameter $\sigma$ in Problem \eqref{prob:source} is chosen as $\sigma=1$. In Figure \ref{fig: starfish}(a) for the model setup, the 128 receivers (denoted by small blue stars) and 128 reference points (denoted by small black points) are uniformly deployed on the circle centered at the origin with radius 10 and 9, respectively. The sampling points for the sources forms a $200\times 200$ uniformly spaced grid over $[-5, 5]\times[-5, 5]$ while a $200\times 200$ uniformly spaced grid over $[-2, 2]\times[-2, 2]$ is used for imaging the obstacle. The normalized indicator functions for imaging the sources and the obstacle are depicted in Figure \ref{fig: starfish}(b)(d) and Figure \ref{fig: starfish}(c)(e), respectively. In Figure \ref{fig: starfish}(b)(d), the exact target sources are marked by the small red ``+'' and it can be seen that the recovered source locations match well with the significant local extreme values of the indicator function. In Figure \ref{fig: starfish}(c)(e), the exact boundary of the scatterer is marked by the black dashed line. All these results demonstrate that the inversion algorithm performs well in identifying the locations of the point sources as well as the shape of the obstacle, provided that the noise level is sufficiently small. 
\end{example}

\begin{figure}
\centering
\subfigure[]{\includegraphics[width=0.5\textwidth]{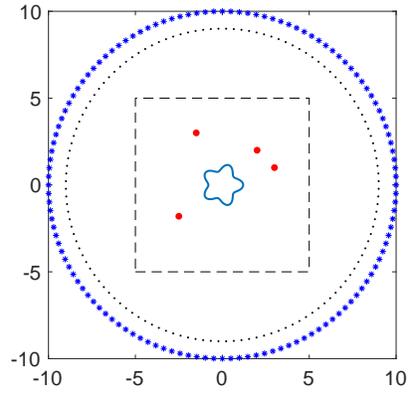}}\\
\subfigure[]{\includegraphics[width=0.4\textwidth]{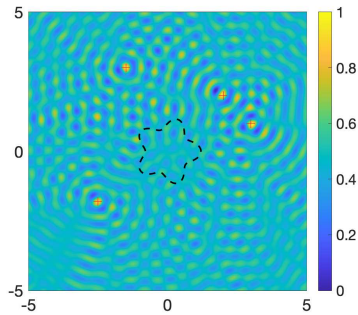}}\quad
\subfigure[]{\includegraphics[width=0.4\textwidth]{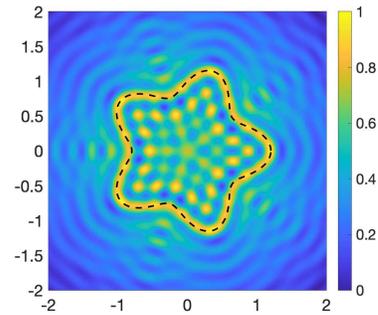}}\\
\subfigure[]{\includegraphics[width=0.4\textwidth]{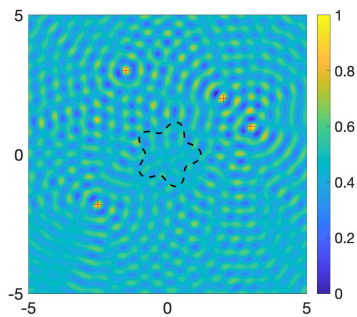}}\quad
\subfigure[]{\includegraphics[width=0.4\textwidth]{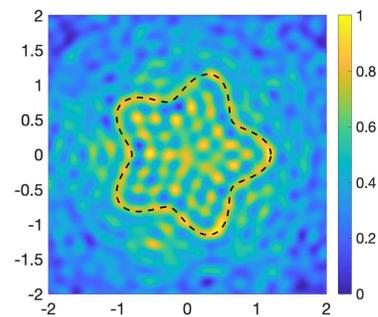}}
\caption{Reconstruction of a sound-soft obstacle and 4 sources. (a) Geometry setting of the model; (b) imaging of sources without noise; (c) imaging of obstacle without noise; (d) imaging of sources with 10\% noise; (e)  imaging of obstacle with 10\% noise.} \label{fig: starfish}
\end{figure}

\begin{example}
    In the second example, we test the inversion of a sound-hard peanut-shaped obstacle and 5 sources located at  $(3, 1), (2, 2), (-1.5, 3), (-2.5, -1.8)$ and $(1,-2.6)$. The exact boundary of the obstacle is given by
    $$
    x(t)=\sqrt{4\cos^2 t+\sin^2 t}(\cos t, \sin t),\quad 0\le t<2\pi.
    $$
    The wavenumber is chosen as $k=5\pi$ and the scaling parameter is still $\sigma=1$. In this example, 5\% noise is added to the synthetic data. Similar to the first example, a $200\times 200$ sampling grid is adopted to image the sources and the obstacle, respectively. We refer to Figure \ref{fig: peanut} for the illustration of the model setup concerning this example. We also compare the performance of the full aperture measurements and limited observations. For the full aperture case, 160 sensors and 160 artificial sources are utilized, whereas only half of the sensors and artificial sources are available in the limited aperture case. It can be seen from Figure \ref{fig: peanut} that, due to the lack of information, the non-illuminated portion of the targets (both source and obstacle) could not be well reconstructed. Meanwhile, the illuminated parts can still be satisfactorily recovered.
\end{example}

\begin{figure}
	\centering
	\subfigure[]{\includegraphics[width=0.4\textwidth]{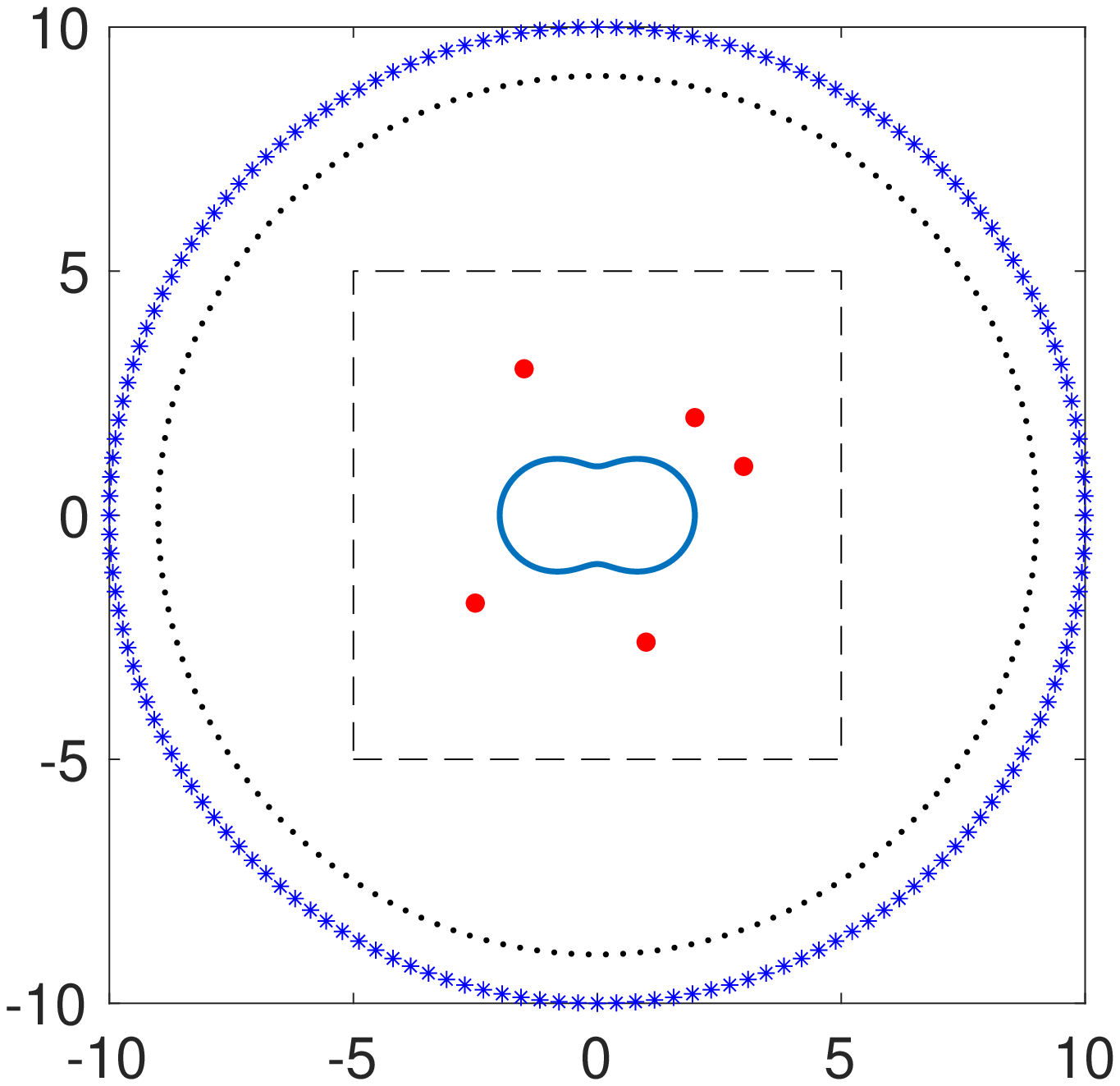}}\quad
	\subfigure[]{\includegraphics[width=0.4\textwidth]{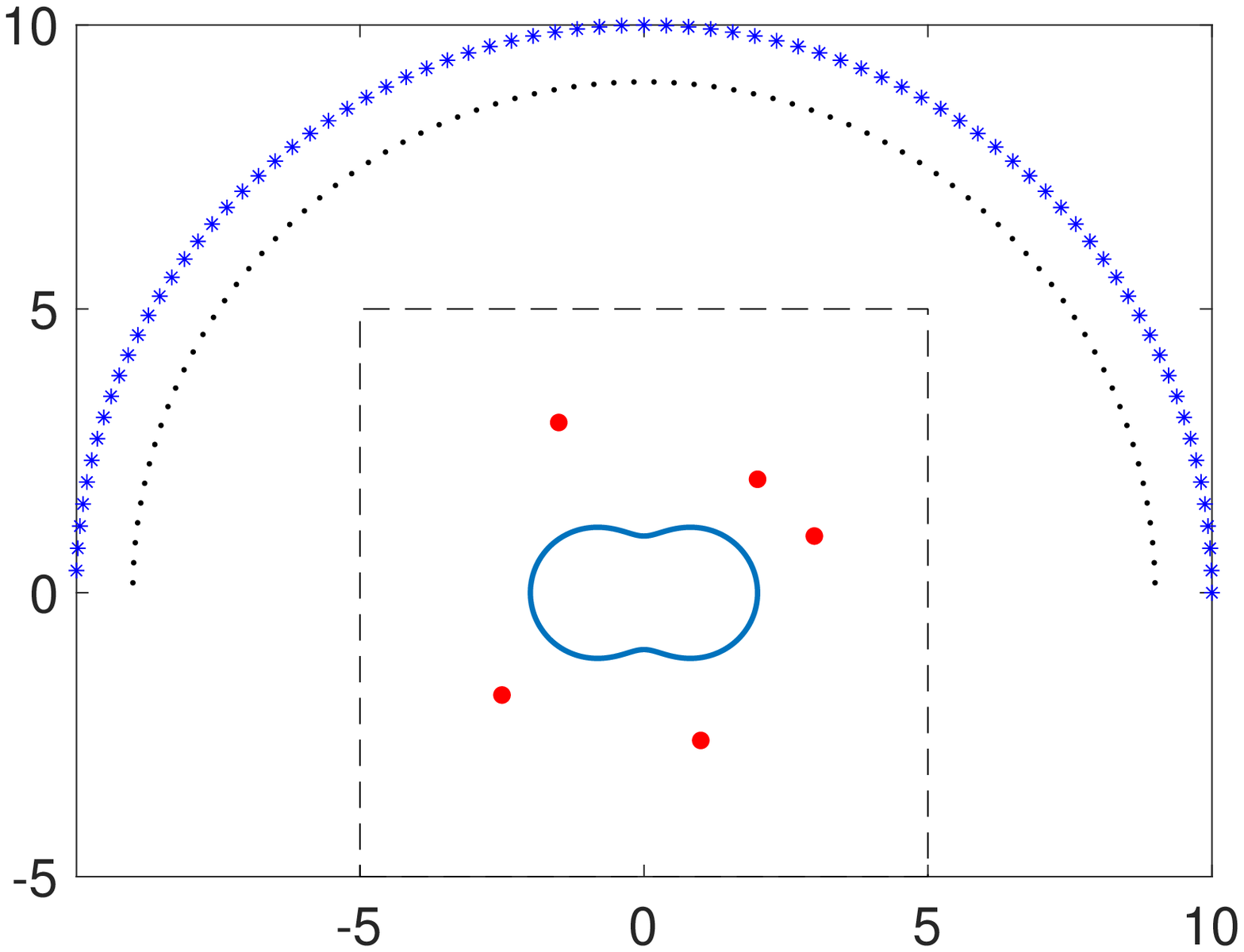}}\\
	\subfigure[]{\includegraphics[width=0.4\textwidth]{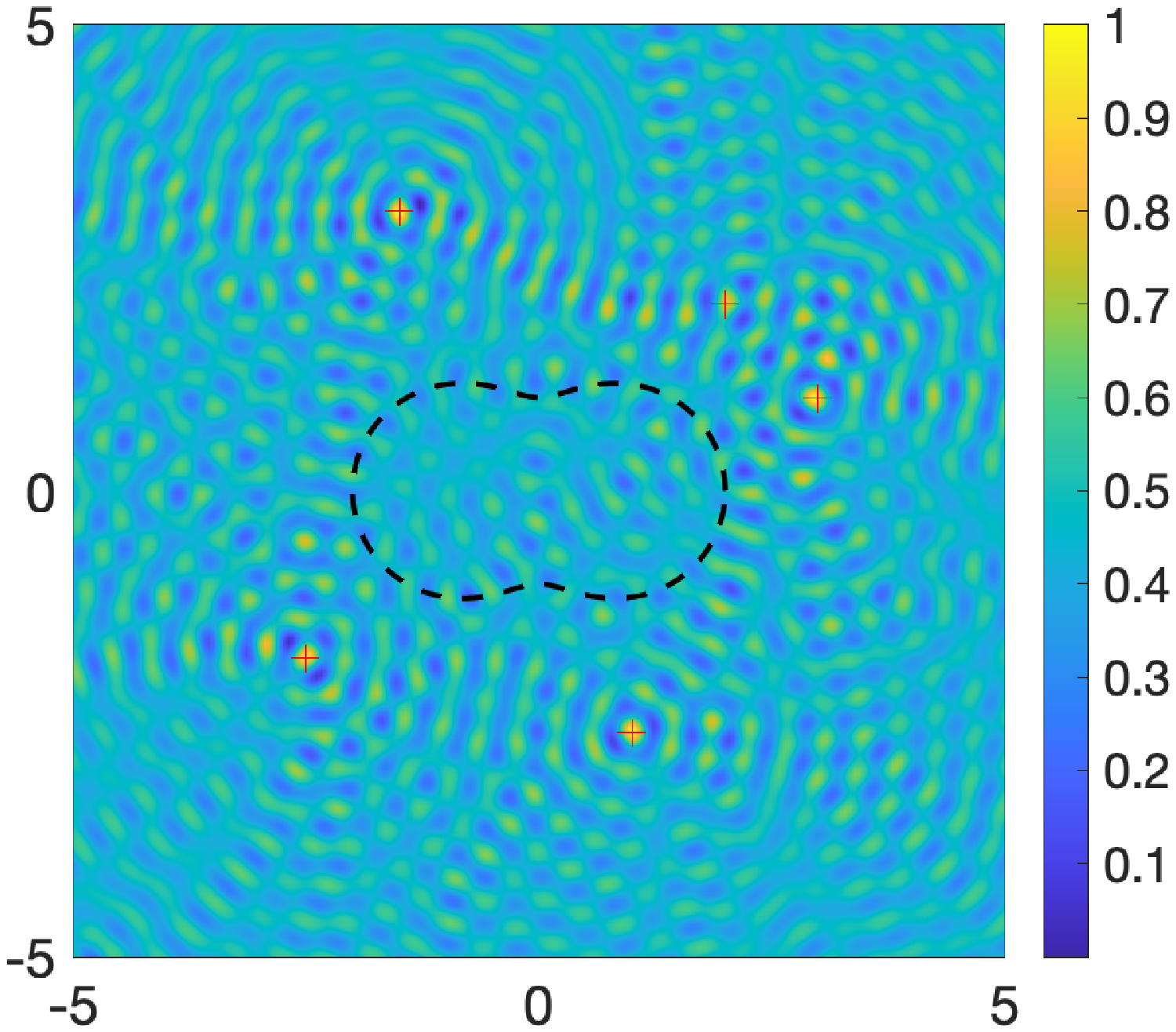}}\quad
	\subfigure[]{\includegraphics[width=0.4\textwidth]{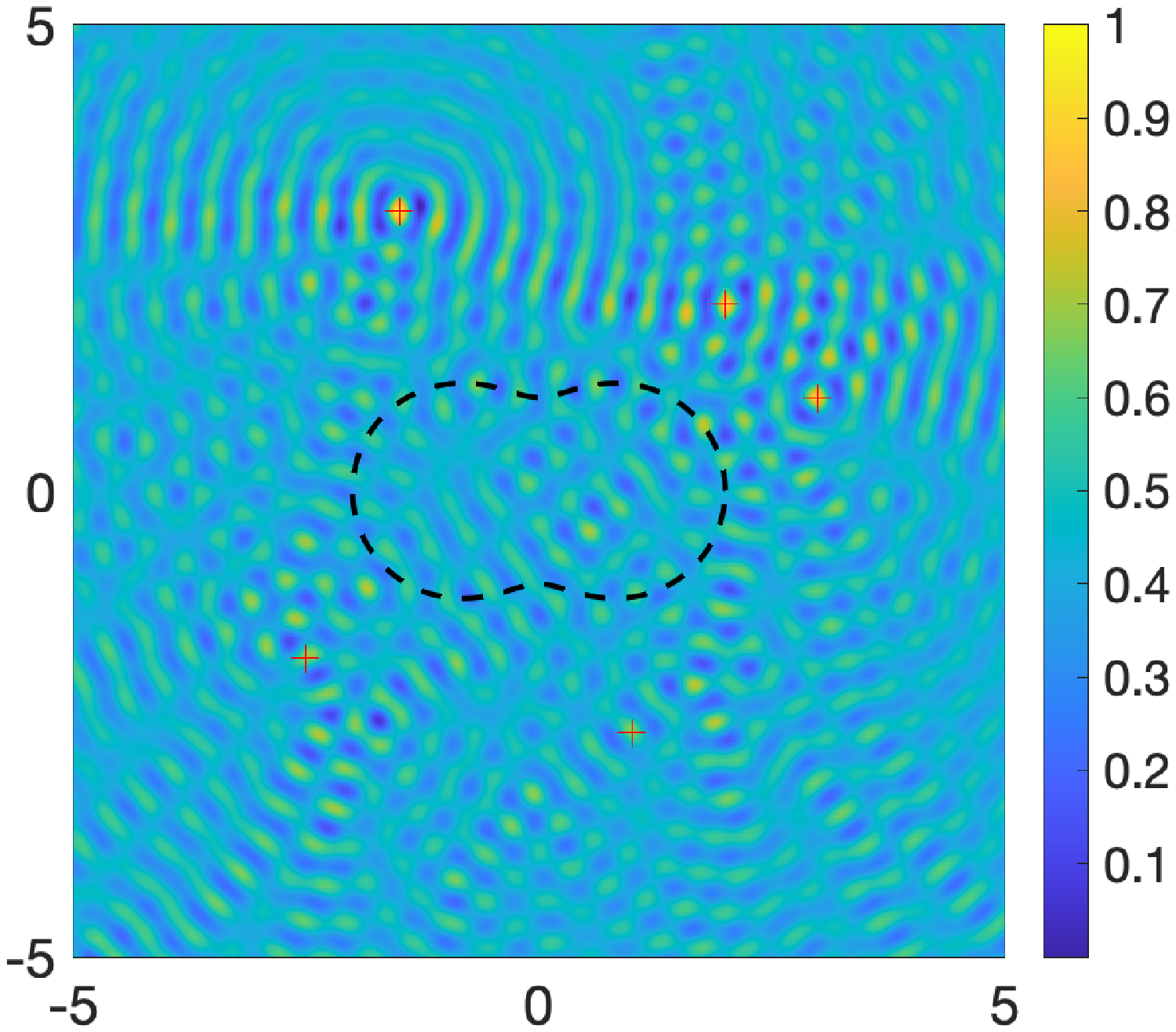}}\\
	\subfigure[]{\includegraphics[width=0.4\textwidth]{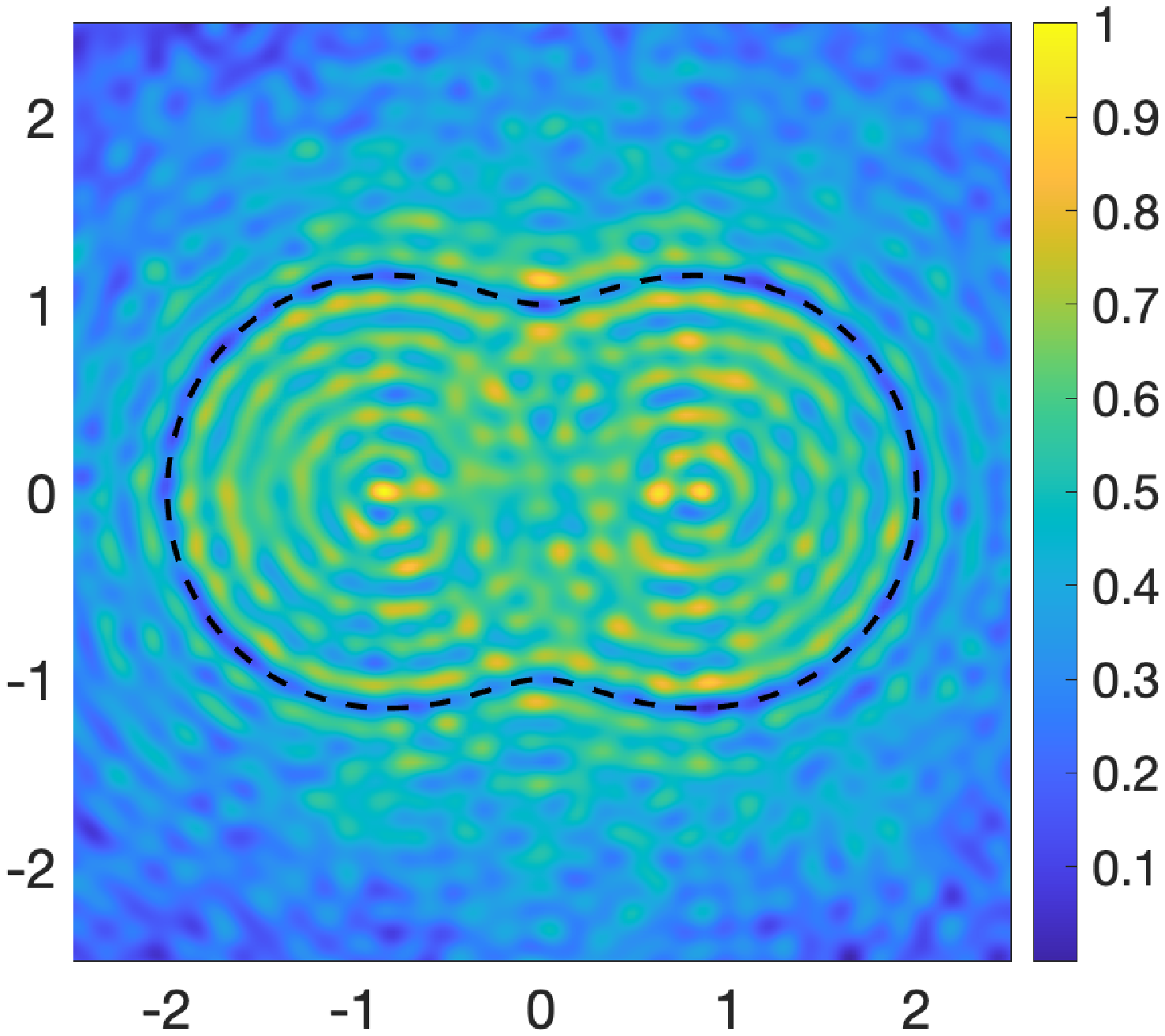}}\quad
	\subfigure[]{\includegraphics[width=0.4\textwidth]{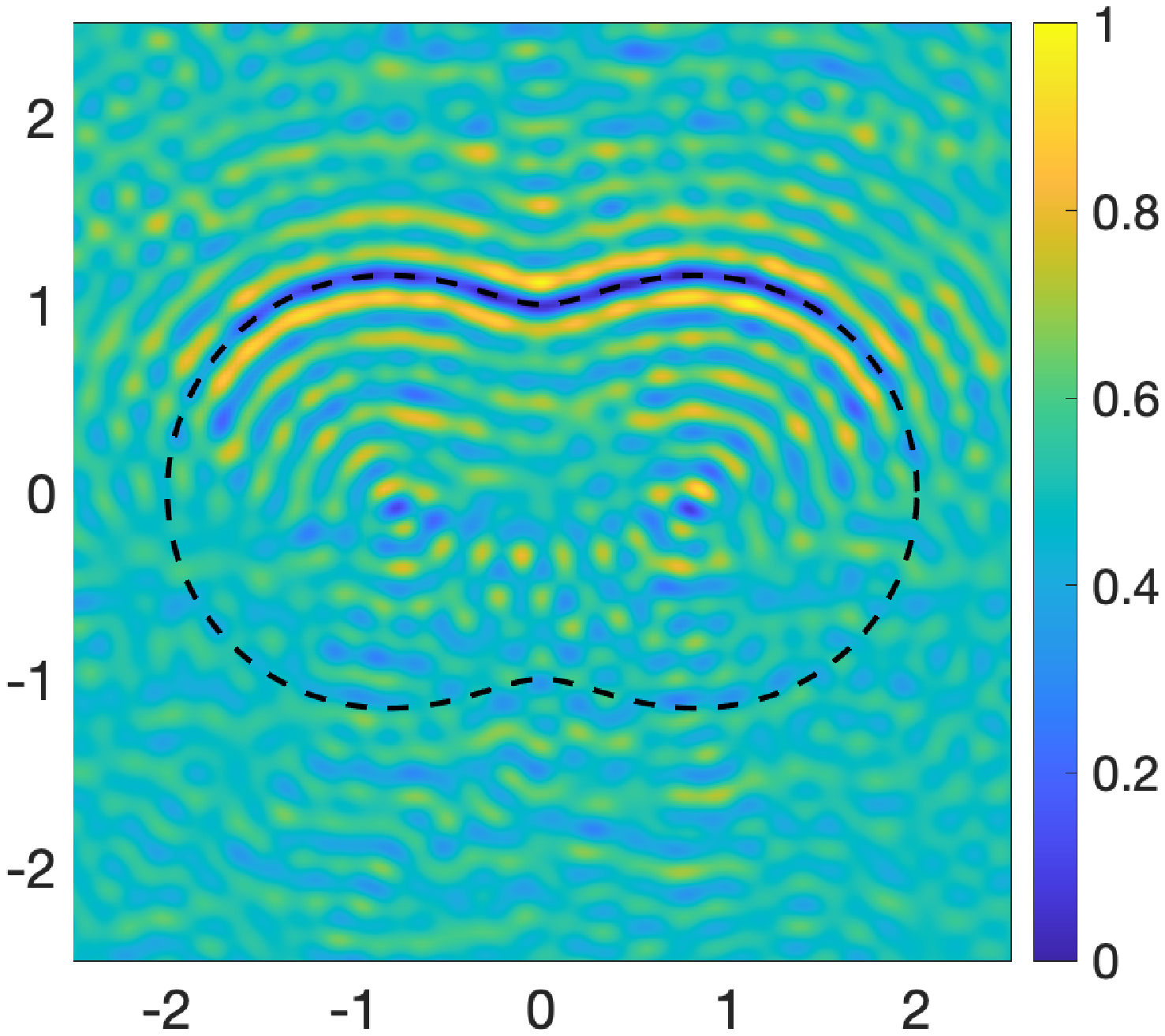}}
	\caption{Reconstruction of a sound-hard obstacle and 5 sources. Left column: full aperture data; Right column: limited aperture data; Top row:  geometry settings of the model; Middle row:  imaging of sources; Bottom row: imaging of obstacle.} \label{fig: peanut}
\end{figure}

\begin{example}\label{exa:kite}
	In the last example, the co-inversion of two sound-soft obstacles and the corresponding source points is considered. The scatterer consists of a kite-shaped obstacle
	$$
	x(t)=(-2+\cos t+0.65\cos 2t-0.65, 2+1.5\sin t), \quad 0\le t<2\pi.
	$$
	and a unit disk centered at $(3, -3)$. We set $k=18, \sigma=1$ and $\delta=5\%$ in this example. The receivers and reference sources are located uniformly on the circle with radii 15 and 14, respectively. For the reconstruction, here a $200\times 200$ sampling grid over $[-6, 6]\times[-6, 6]$ is utilized to image the sources and obstacles. The reconstructions are demonstrated in Figure \ref{fig: kite}. One could easily observe that the quality of obstacle recovery deteriorates greatly as the number of measurements decreases. At first glance, it seems from Figure \ref{fig: kite}(c)(d) that the reconstruction of sources is not significantly affected by the reduction of data. However, if we collect the coordinates of local maximizers over the sampling grid and compare them with the exact source points,  then it can be immediately seen that the case of insufficient data would produce a mismatched reconstruction of one source point. This comparison is illustrated in Figure \ref{fig:kite_source} where the exact target sources are marked by the small red ``+'' and the reconstructions are marked by the small black ``$\circ$''. These results show that a diminution of the data could lead to incorrect reconstructions, hence the availability of adequate data is essential for the accuracy of recoveries.
\end{example}

\begin{figure}
	\centering
	\subfigure[]{\includegraphics[width=0.45\textwidth]{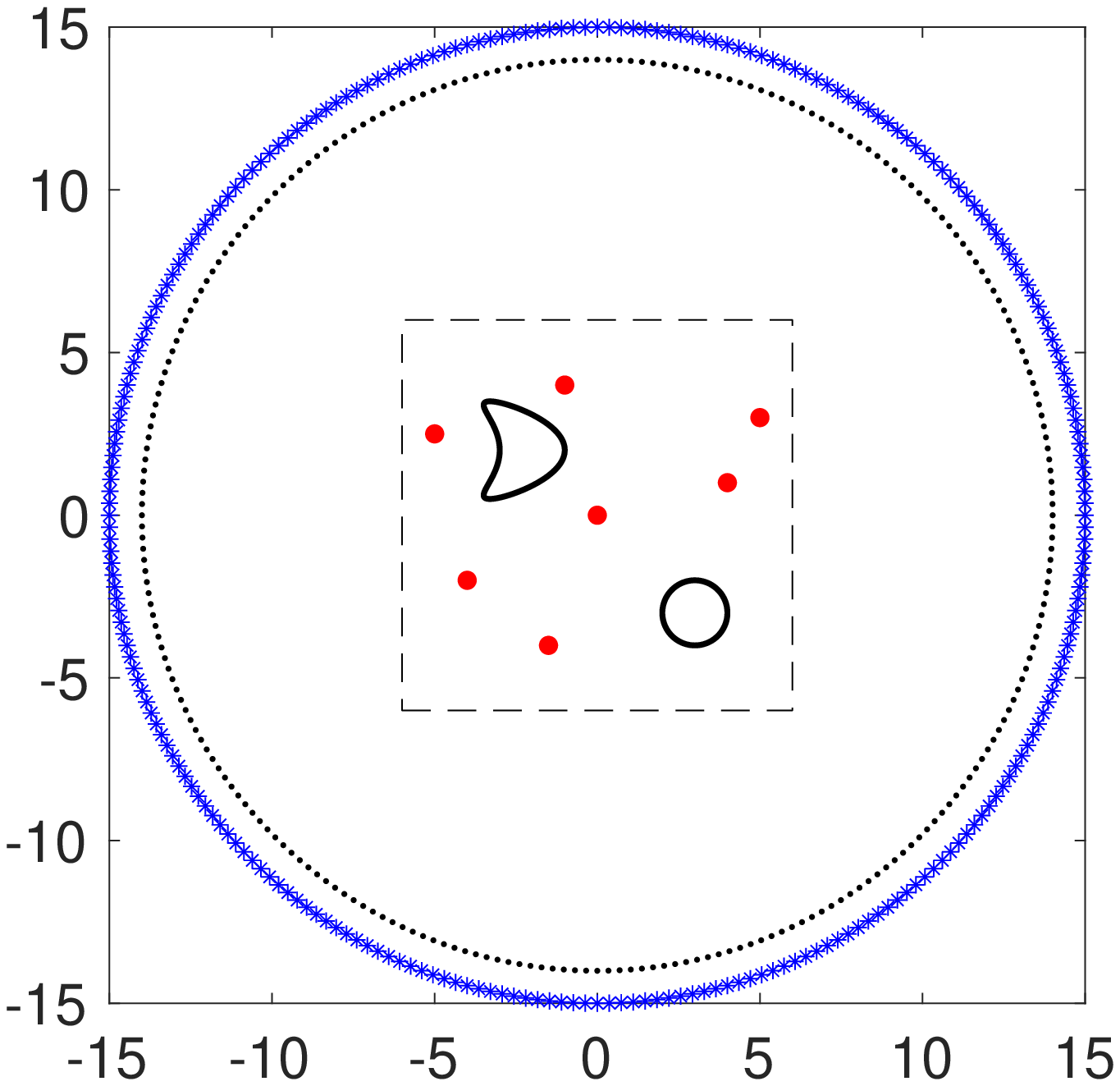}}\quad
	\subfigure[]{\includegraphics[width=0.45\textwidth]{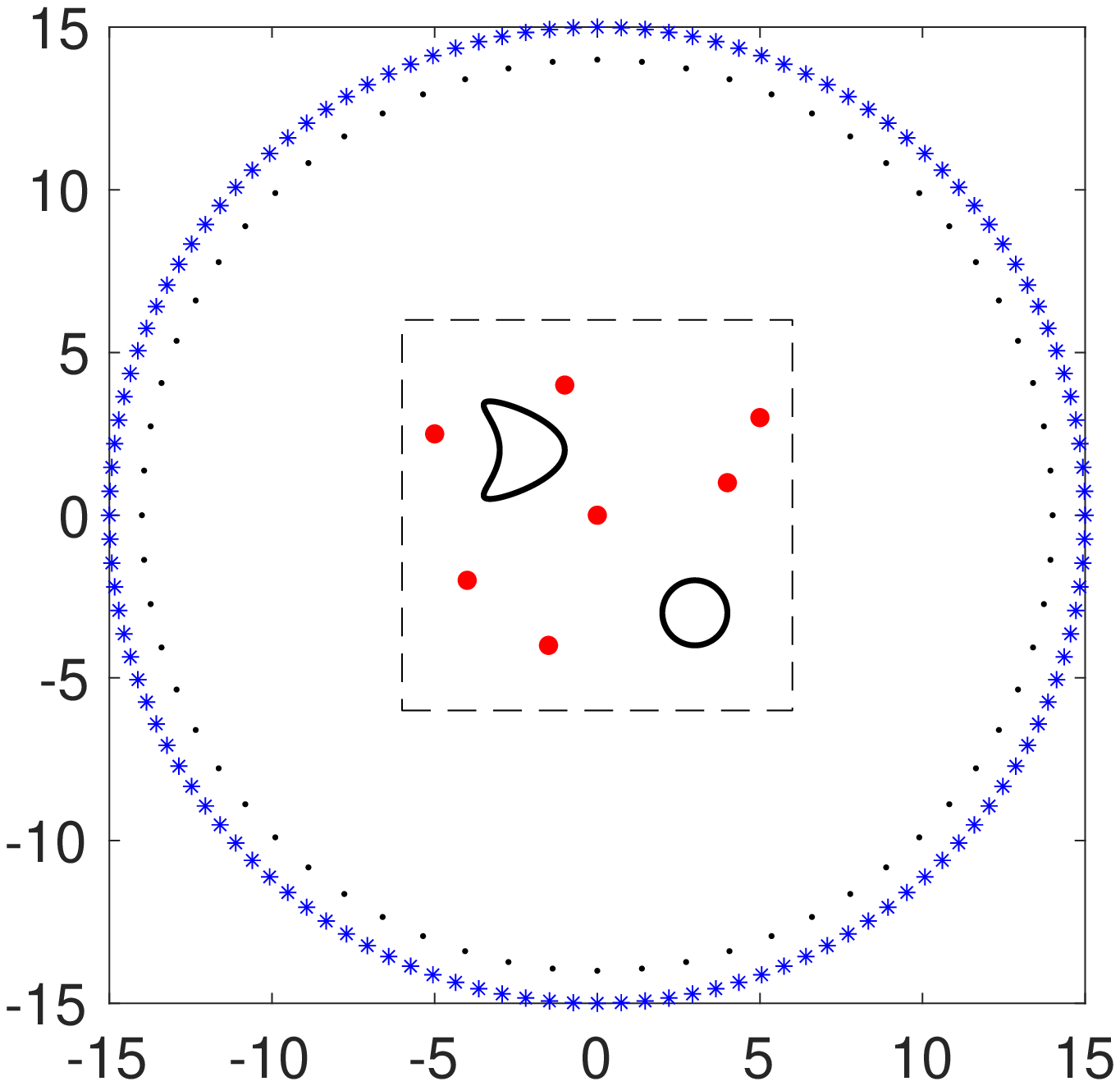}}\\
	\subfigure[]{\includegraphics[width=0.45\textwidth]{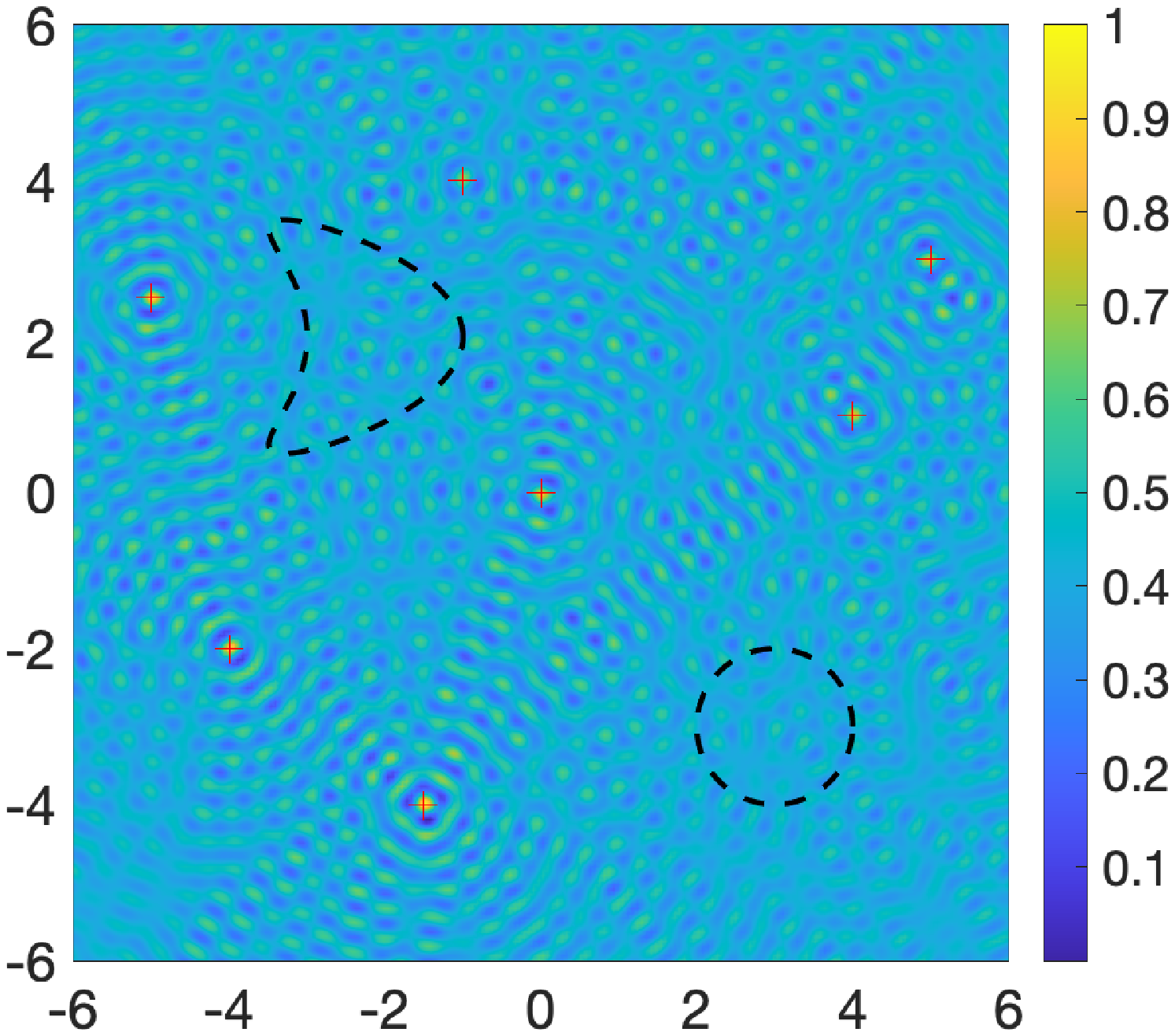}}\quad
	\subfigure[]{\includegraphics[width=0.45\textwidth]{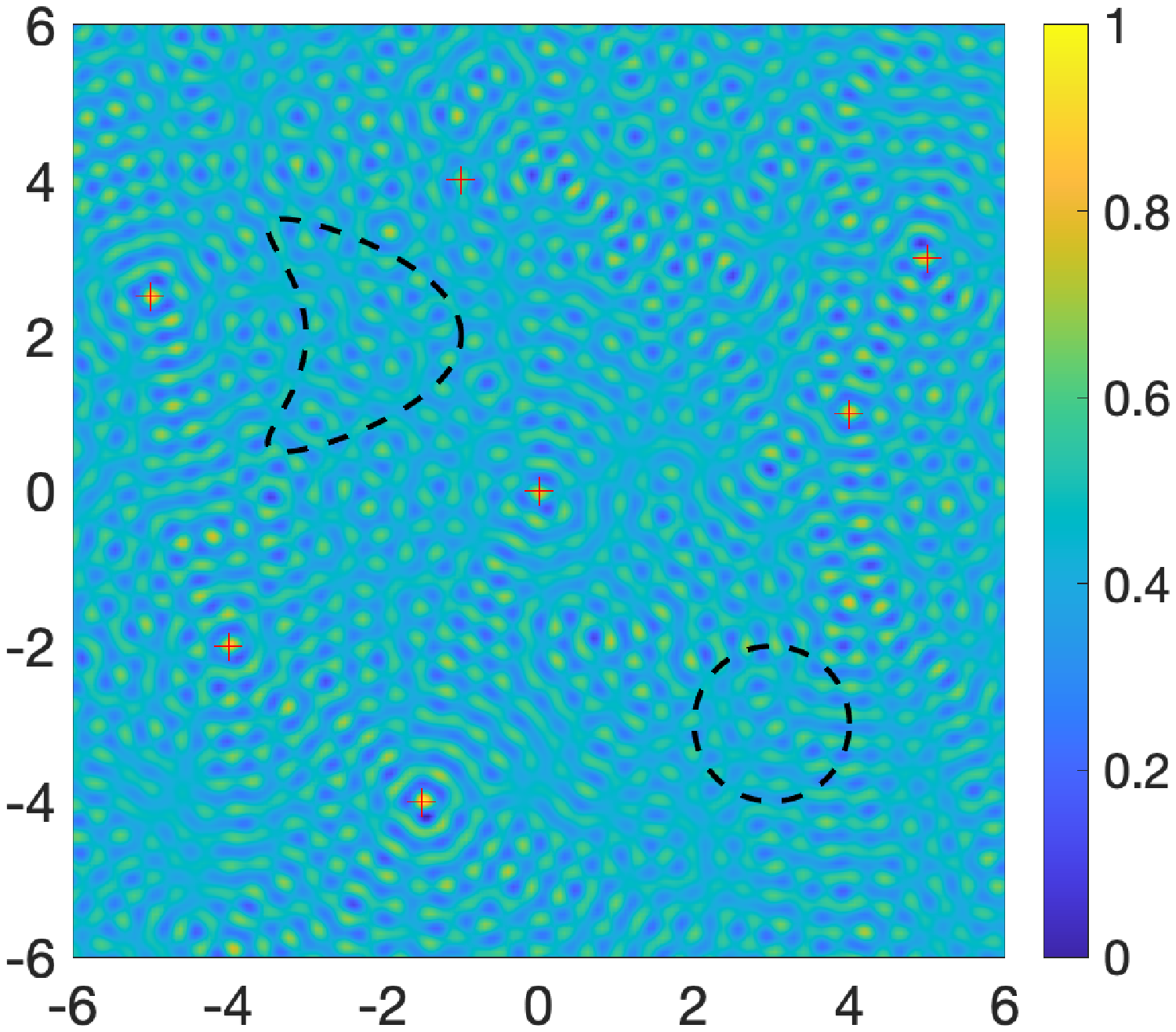}}\\
	\subfigure[]{\includegraphics[width=0.45\textwidth]{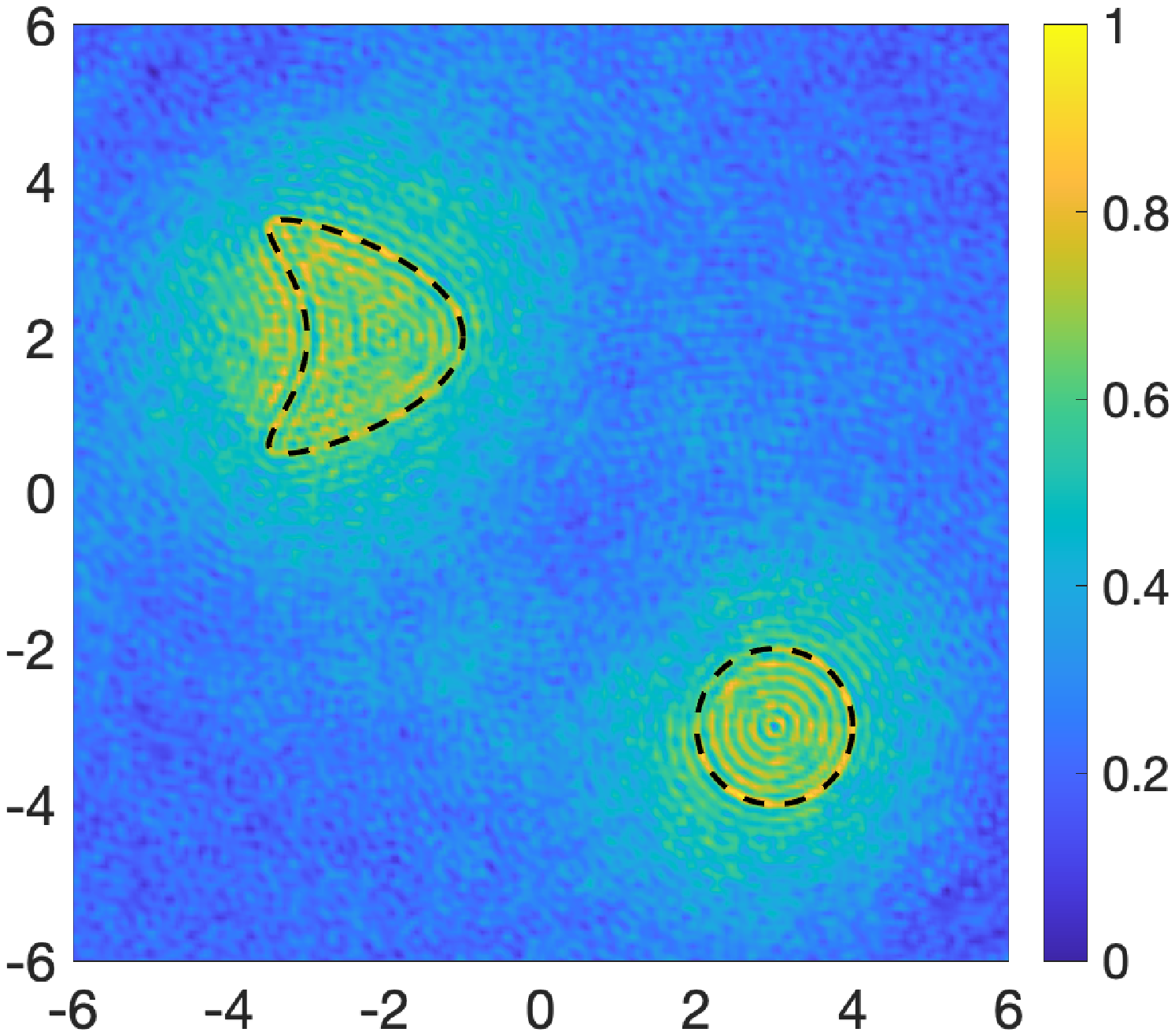}}\quad
	\subfigure[]{\includegraphics[width=0.45\textwidth]{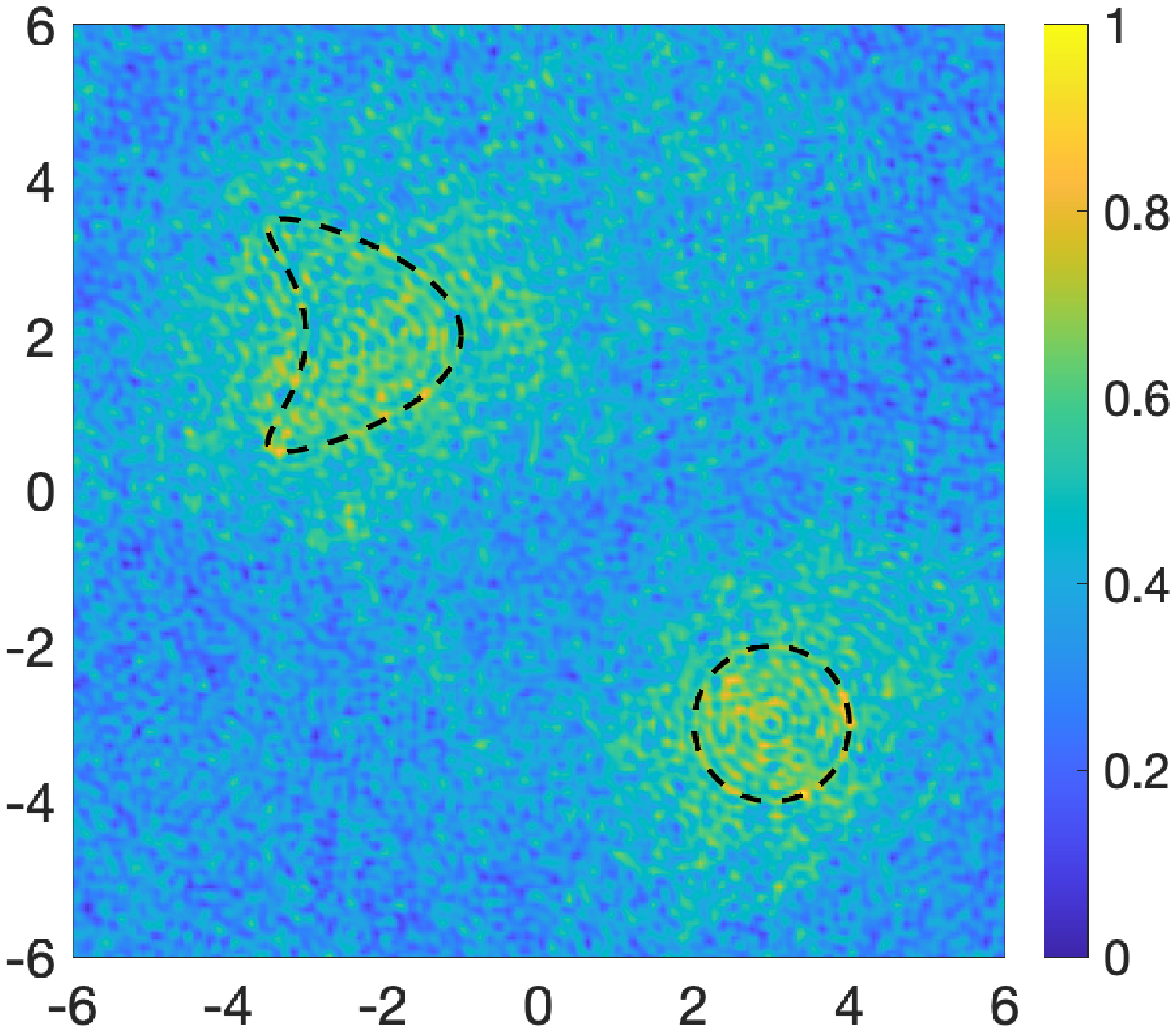}}
	\caption{Reconstruction of two obstacles and 7 sources. Left column: 256 receivers and 256 reference sources; Right column: 128 receivers and 64 reference sources; Top row:  geometry settings of the model; Middle row:  imaging of sources; Bottom row: imaging of obstacles.} \label{fig: kite}
\end{figure}

\begin{figure}
	\centering
	\subfigure[]{\includegraphics[width=0.45\textwidth]{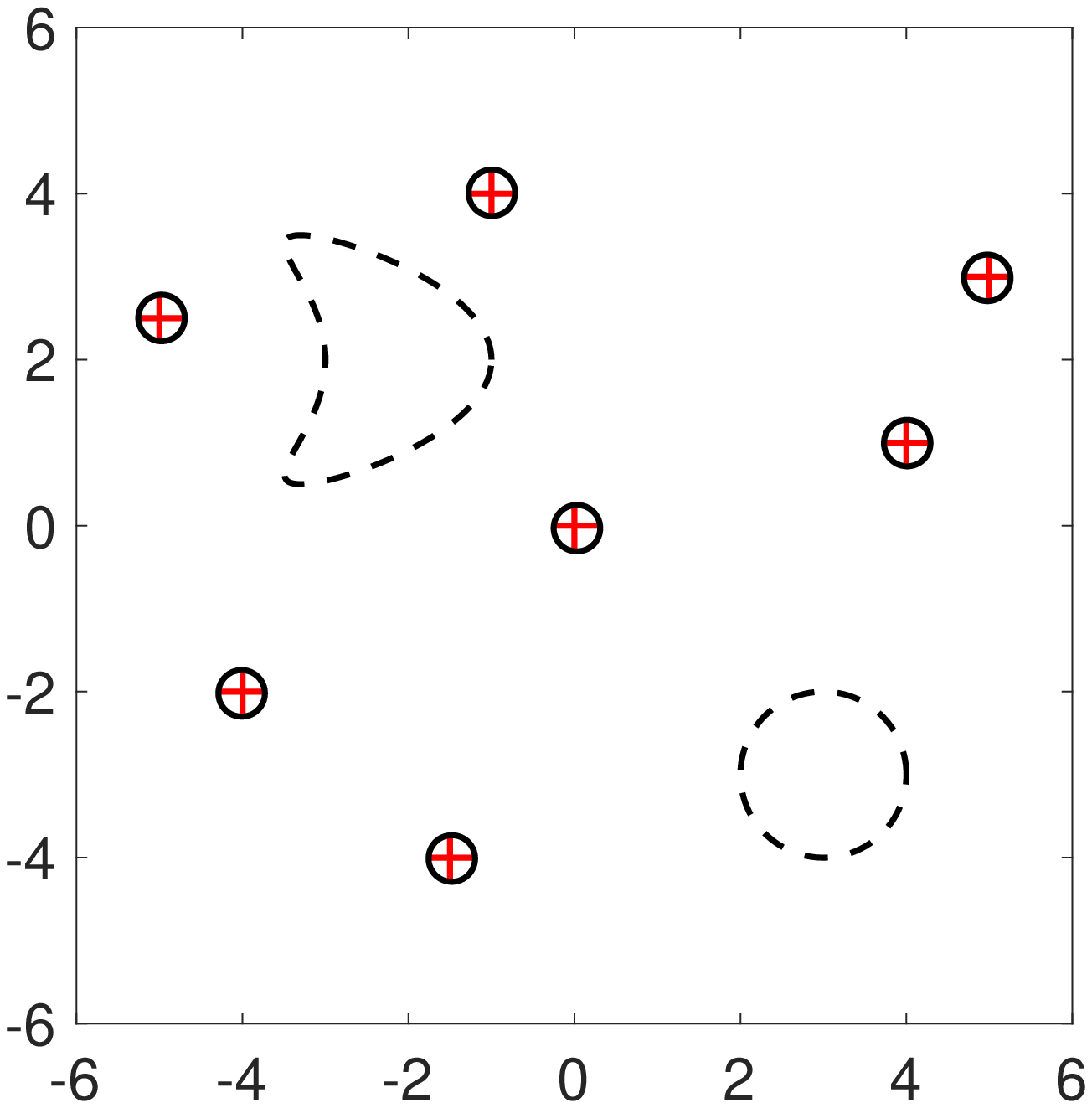}}\quad
	\subfigure[]{\includegraphics[width=0.45\textwidth]{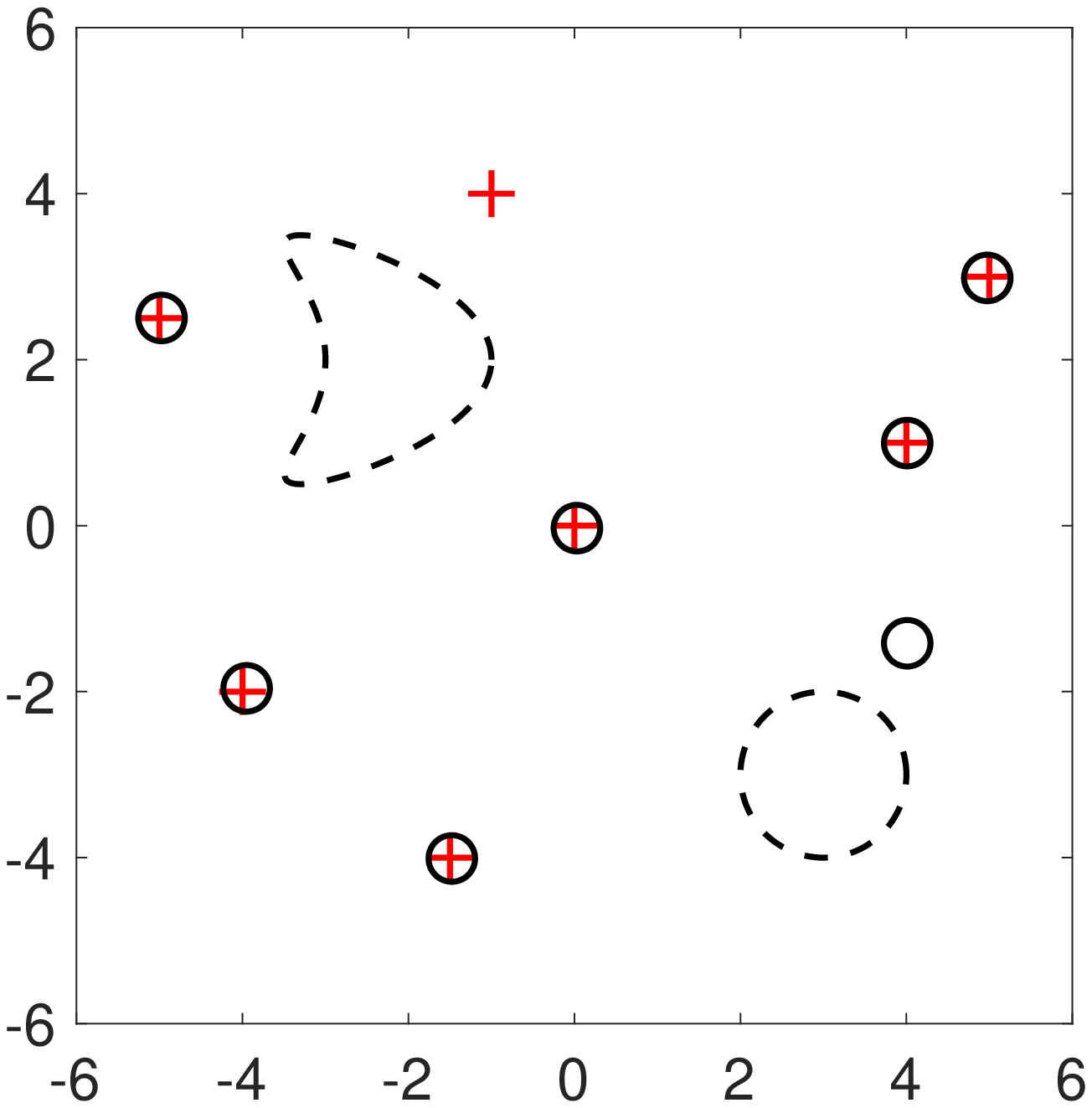}}
	\caption{A comparison of the source reconstruction. (a) 256 receivers and 256 reference sources; (b) 128 receivers and 64 reference sources.} \label{fig:kite_source}
\end{figure}

\begin{table}
	\centering
	\caption{Reconstruction of source locations in Example \ref{exa:kite}.}\label{tab:location}
	\begin{tabular}{cccc}
		\toprule
		        & Exact locations & Reconstruction (more data) &  Reconstruction (less data)\\ \midrule
		Point 1 &    $(0, 0)$     &   $(0.0302, 0.0302 )$  &   $(0.0302, 0.0302 )$   \\
		Point 2 &  $(-1.5, -4)$   &   $(-1.4774, -4.0101)$    &   $(-1.4774, -4.0101)$  \\
		Point 3 &    $(5, 3)$     &   $(4.9749, 2.9849)$    &   $(4.9749, 2.9849)$ \\
		Point 4 &   $(-5, 2.5)$   &   $(-4.9749, 2.5025)$   &   $(-4.9749, 2.5025)$ \\
		Point 5 &    $(-1, 4)$    &   $(-0.9950, 4.0101)$   &  $(4.0101, -1.4171)$ \\
		Point 6 &   $(-4, -2)$    &   $(-3.9497, -1.9598)$    &   $(-3.9497, -1.9598)$  \\
		Point 7 &    $(4, 1)$     &   $(4.0101, 0.9950)$    &   $(4.0101, 0.9950)$ \\ \bottomrule
	\end{tabular}
\end{table}

\section*{Acknowledgments}

D. Zhang and Y. Wu were supported by NSFC grant 12171200 and Y. Guo was supported by NSFC grant 11971133.

\end{document}